\documentclass{amsart}
%%%%%%%%%%%%%%%%%%%%%%%%%%%%%%%%%%%%%%%%%%%%%%%%%%%%%%%%%%%%%%%%%%%%%%%%%%%%%%%%%%%%%%%%%%%%%%%%%%%%%%%%%%%%%%%%%%%%%%%%%%%%%%%%%%%%%%%%%%%%%%%%%%%%%%%%%%%%%%%%%%%%%%%%%%%%%%%%%%%%%%%%%%%%%%%%%%%%%%%%%%%%%%%%%%%%%%%%%%%%%%%%%%%%%%%%%%%%%%%%%%%%%%%%%%%%
\usepackage{amssymb}
\usepackage{amsmath}
\usepackage{amsfonts}

\setcounter{MaxMatrixCols}{10}
%TCIDATA{OutputFilter=LATEX.DLL}
%TCIDATA{Version=5.50.0.2953}
%TCIDATA{<META NAME="SaveForMode" CONTENT="1">}
%TCIDATA{BibliographyScheme=Manual}
%TCIDATA{Created=Monday, June 07, 2010 13:07:05}
%TCIDATA{LastRevised=Thursday, May 07, 2015 11:40:09}
%TCIDATA{<META NAME="GraphicsSave" CONTENT="32">}
%TCIDATA{<META NAME="DocumentShell" CONTENT="Articles\SW\AMS Journal Article">}
%TCIDATA{Language=American English}
%TCIDATA{CSTFile=amsartci.cst}

\newtheorem{theorem}{Theorem}[section]
\theoremstyle{plain}

\newtheorem{corollary}[theorem]{Corollary}

\newtheorem{example}[theorem]{Example}

\newtheorem{lemma}[theorem]{Lemma}

\newtheorem{proposition}[theorem]{Proposition}

\numberwithin{equation}{section}

\begin{document}
\title{On complemented non-abelian chief factors of a Lie algebra}
\author{David A. TOWERS}
\address{Lancaster University\\
Department of Mathematics and Statistics \\
LA$1$ $4$YF Lancaster\\
ENGLAND}
\email{d.towers@lancaster.ac.uk}
\author{Zekiye CILOGLU}
\address{Suleyman Demirel University\\
Department of Mathematics\\
$32260$, Isparta, TURKEY}
\email{zekiyeciloglu@sdu.edu.tr}
\thanks{$2010$ \textit{Mathematics Subject Classification.} $17$B$05$, $17$B$%
20$, $17$B$30$, $17$B$50$. }
\keywords{L-Algebras, L-Equivalence, c-factor, m-factor, cc%
%TCIMACRO{\U{b4}}%
%BeginExpansion
\'{}%
%EndExpansion
-type.}

\begin{abstract}
The number of Frattini chief factors or of chief factors which are complemented by a maximal subalgebra of a finite-dimensional Lie algebra $L$ is the same in every chief series for $L$, by \cite[Theorem 2.3]{[11]}. However, this is not the case for the number of chief factors which are simply complemented in $L$. In this paper we determine the possible variation in that number.
\end{abstract}

\maketitle

\section{\textbf{Preliminary Results}}

Throughout $L$ will be a finite-dimensional Lie algebra with product $[,]$ over a field. We say that $A$ is an {\em $L$-algebra} if it is a Lie algebra (with product denoted by juxtaposition) and there is a homomorphism $\theta : L \rightarrow$ Der $A$. Then $A$ is also an $L$-module with action $\cdot$ given by $x.a=\theta(x)(a)$ and we have $x.(a_1a_2)=(x.a_1)a_2-(x.a_2)a_1$. If $A$ is an ideal of $L$ we will consider it as an $L$-algebra in the natural way. Given such an $L$-algebra $A,$ we define the corresponding semi-direct sum $A\rtimes L$ as the set of ordered pairs, where the multiplication is given by
\begin{equation*}
 (a_{1}, x_{1})(a_{2},x_{2}) =(x_{1}.a_{2}-x_{2}.a_{1}+a_1a_2,[x_{1},x_{2}]) 
\end{equation*}  
for all $\ a_{1},\ a_{2}\in A$ and for all $\ x_{1},\ x_{2}\in $
$L$.
\par

Let $A$ and $B$ two $L$-algebras. An (algebra) isomorphism $\theta
:A\rightarrow B\ $is said to be an {\em $L$-isomorphism }if it is also an 
$L$-module isomorphism. Note that this is stronger than the definition used in \cite{[11]}, where $\theta$ is only required to be an $L$-module isomorphism. However, the results proved there apply equally to this stronger version. When such a $\theta \ $exists we write $A\cong
_{L}B.\ $ We say that $A$, $B$ are {\em $L$-equivalent,} written $A\sim
_{L}B$ if there is an isomorphism $\Phi :A\rtimes L\rightarrow B\rtimes L$
such that the following diagram commutes:
\begin{equation*}
\begin{array}{ccccccccc}
0 & \hookrightarrow & A & \hookrightarrow & A\rtimes L & \twoheadrightarrow & L
& \twoheadrightarrow & 0 \\ 
&  & {\Large \downarrow }{\small \phi } &  & {\Large \downarrow }{\small %
\Phi } &  & {\Large \parallel } &  &  \\ 
0 & \hookrightarrow & B & \hookrightarrow & B\rtimes L & \twoheadrightarrow & L
& \twoheadrightarrow & 0
\end{array}
\end{equation*}

In this case we say that the extensions $A \hookrightarrow A\rtimes L \twoheadrightarrow L$ and $B \hookrightarrow B\rtimes L \twoheadrightarrow L$ are \textit{equivalent}. It is clear that $L$-equivalence is an equivalence relation. 
\par

If $\phi : A \rightarrow B$ is an $L$-isomorphism, then putting $\Phi((a,x))=(\phi(a),x)$ defines an isomorphism $\Phi : A \rtimes L \rightarrow B \rtimes L$ making the above diagram commutative. It follows that $L$-isomorphic $L$-algebras are $L$-equivalent. However, the converse is false. For example, if $L=A \oplus B$, where $A$ and $B$ are isomorphic simple Lie algebras, then $A$ and $B$ are $L$-equivalent, but they are not $L$-isomorphic, as $C_L(A)=B$ and $C_L(B)=A$.
\par 

If $B$ is an $L$-algebra we define a {\em $1$-cocycle} of $L$ with values in $B$ to be a map $\beta \in Z^{1}( L,B) $ such that 
\[ \beta([x,y])=x.\beta(y)-y.\beta(x)+\beta(x)\beta(y).
\] Then the map $\theta: L \rightarrow$ Der $B$ given by $\theta(x)=\theta_x$ where $\theta_x(b)=\beta(x)b+x.b$ for all $x\in L$ and $b\in B$ is a homomorphism, and so we can define another $L$-module structure on $B$ by
\begin{equation*}
x\odot b=\beta( x) b +x.b.
\end{equation*}
We denote the $L$-algebra with this $L$-module structure by $B_{\beta }.$
\par

The following proposition gives us a useful criterion for two $L$-algebras to be
equivalent.

\begin{proposition}\label{p:1}
Let $A$ and $B$ be two $L$-algebras. They are $L$-equivalent if and only if
there is a $1$-cocycle $\beta \in Z^{1}( L,B) $ and an $L$-isomorphism $\phi$ from $A$ to $B_{\beta }$ (that is, $\phi (x.a)=x\odot \phi
(a)$ for all $x\in L,\ a\in A$). 
\end{proposition}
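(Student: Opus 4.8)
The plan is to prove both implications by converting the abstract equivalence $\Phi : A \rtimes L \to B \rtimes L$ into the concrete data $(\beta,\phi)$ and back; the only real work is a single multiplicativity check.

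For the forward direction I would suppose $A \sim_L B$ via an isomorphism $\Phi$ making the displayed diagram commute. Commutativity of the left-hand square forces $\Phi((a,0)) = (\phi(a),0)$ for $\phi := \Phi|_A : A \to B$, and commutativity of the right-hand square forces the second coordinate of $\Phi((a,x))$ to be $x$; writing $\Phi((0,x)) = (\beta(x),x)$ and using linearity of $\Phi$ then gives $\Phi((a,x)) = (\phi(a)+\beta(x),x)$. Since $A$ and $B$ are the kernels of the projections onto $L$ and $\Phi$ is an isomorphism, $\phi$ is an algebra isomorphism of $A$ onto $B$, and $\beta$ is linear because $\Phi$ is. I would then apply the homomorphism property of $\Phi$ to the product $(0,x)(0,y) = (0,[x,y])$ in $A \rtimes L$: comparing first coordinates of $\Phi((0,[x,y]))$ and $\Phi((0,x))\Phi((0,y))$ yields exactly $\beta([x,y]) = x.\beta(y) - y.\beta(x) + \beta(x)\beta(y)$, so $\beta \in Z^{1}(L,B)$. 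Applying it instead to $(0,x)(a,0) = (x.a,0)$ and comparing first coordinates gives $\phi(x.a) = x.\phi(a) + \beta(x)\phi(a) = x \odot \phi(a)$, that is, $\phi$ is an $L$-isomorphism from $A$ to $B_{\beta}$.

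For the converse, given $\beta \in Z^{1}(L,B)$ and an $L$-isomorphism $\phi : A \to B_{\beta}$, I would define $\Phi : A \rtimes L \to B \rtimes L$ by $\Phi((a,x)) = (\phi(a)+\beta(x),x)$. Since $\beta(0)=0$ this restricts to $(a,0)\mapsto(\phi(a),0)$ and projects to the identity on $L$, so the diagram commutes; it is injective (if $\Phi((a,x))=0$ then $x=0$ and $\phi(a)=0$, hence $a=0$), hence bijective as a linear endomorphism of a space of the same finite dimension, with explicit inverse $(b,x)\mapsto(\phi^{-1}(b-\beta(x)),x)$. It remains to check that $\Phi$ is multiplicative. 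Expanding $\Phi((a_{1},x_{1})(a_{2},x_{2}))$ by the multiplication formula and then substituting $\phi(a_{1}a_{2})=\phi(a_{1})\phi(a_{2})$, $\phi(x.a)=\beta(x)\phi(a)+x.\phi(a)$, and the cocycle identity for $\beta([x_{1},x_{2}])$, produces a sum of about eight monomials in the $\phi(a_{i})$, the $\beta(x_{i})$, and their actions; expanding $\Phi((a_{1},x_{1}))\Phi((a_{2},x_{2}))$ directly from the multiplication formula in $B \rtimes L$ produces the same monomials, the only discrepancy being that a term $-\beta(x_{2})\phi(a_{1})$ on one side appears as $\phi(a_{1})\beta(x_{2})$ on the other, and these agree by anticommutativity of the product in $B$. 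The second coordinates are $[x_{1},x_{2}]$ in both cases.

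The main obstacle is purely organizational: lining up the eight-or-so terms on each side of the multiplicativity identity. The conceptual point that makes it work is that the twist $\beta$ feeds into the module action defining $B_{\beta}$ and into the product on $B \rtimes L$ in compatible ways, the lone mismatched term being absorbed by anticommutativity of the Lie product; no cohomological input beyond the cocycle identity itself is needed.
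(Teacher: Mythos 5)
Your proposal is correct and follows essentially the same route as the paper: both directions hinge on the correspondence $\Phi((a,x)) = (\phi(a)+\beta(x),x)$, which is exactly the map the paper constructs in one direction and inverts (via $\beta(x)=\pi_1(\Phi((0,x))-(0,x))$) in the other. You merely spell out the multiplicativity and cocycle verifications that the paper leaves as ``straightforward to check,'' including the correct observation that the one mismatched term is absorbed by anticommutativity in $B$.
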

\begin{proof}
Suppose first that there is a $1$-cocyle $\beta \in Z^{1}( L,B)$ and an $L$-isomorphism $\phi: A\rightarrow B_{\beta }.$ Then, the map $\Phi :A\rtimes
L\rightarrow B\rtimes L$ given by
\begin{equation*}
\Phi (( a,x)) =(\phi ( a) +\beta ( x),x)
\end{equation*}
shows that $A$ and $B$ are $L$-equivalent. 
\par

Conversely, suppose that they are $L$-equivalent under the isomorphism $\Phi :A\rtimes
L\rightarrow B\rtimes L$ . Define $\beta :L\rightarrow B$ by $\beta ( x) = \pi_1(\Phi( (0,x)) -(0,x))$ where $\pi_1: B \rtimes L \rightarrow B : (b,x) \mapsto b$ is the projection map onto $B$. Then it is straightforward to check that $\beta \in Z^{1} L,B) $ and that $\phi $ is an $L$-isomorphism from $A$ to $B_{\beta }.$
\end{proof}

If $A$ and $B$ are abelian and $L$-equivalent, they have the same dimension, and so are $L$-isomorphic. However, as we have seen, for nonabelian $L$-algebras, $L$-equivalence is strictly weaker than $L$-isomorphism.
\par

Recall that the factor algebra $A/B$ is called a {\em chief factor} of $L$ if $B$ is an ideal of $L$ and $A/B$ is a minimal ideal of $L/B$. The  {\em Frattini ideal} of $L$, denoted by $\phi(L)$, is the largest ideal of $L$ contained in the intersection of all of the maximal subalgebras of $L$. A chief factor $A/B$ is called {\em Frattini }if $A/B\subseteq \phi
\left( L/B\right) .$

If there is a subalgebra, $M$ such that $L=A+M$ and $B\subseteq A\cap M,$ we
say that $A/B\ $is$\ $a \textit{supplemented} chief factor of $L,$ and that $%
M$ is a \textit{supplement} of $A/B\ $in $L.$ Also, if $A/B$ is a
non-Frattini chief factor of $L$, then $A/B$ is \textit{supplemented} by a
maximal subalgebra $M$ of $L.$

If $A/B\ $is$\ $a chief factor of $L$ supplemented by a subalgebra $M$ of $%
L, $ and \linebreak $A\cap M=B$ then we say that $A/B$ is \textit{%
complemented} chief factor of $L,$ and $M$ is a \textit{complement} of $A/B$
in $L$. When $L$ is \textit{solvable}, it is easy to see that a chief factor
is Frattini  if and only if it \noindent is not complemented.

The {\em centralizer} of an $L$-algebra $A$ in $L$ is $C_{L}( A)=\{ x\in L\mid x.a=0$ for all $a\in A\} .$ We will denote an algebra direct sum by `$\oplus$', whereas `$\dot{+}$' will denote a direct sum of the underlying vector space only. Then the following proposition gives a criterion for a nonabelian chief factor to be complemented.

\begin{proposition}\label{p:2}
Let $A_{1}/B_{1}$ be a nonabelian chief factor of $L.$ Then, $A_{1}/B_{1}$ is complemented in $L$ if and only if there exists an $L$-algebra $B$ such that$\ A_{1}/B_{1}\sim _{L}B,$ and $A_{1}\subseteq
C_{L}( B)$.
\end{proposition}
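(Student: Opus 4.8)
The plan is to use Proposition \ref{p:1} to translate "$A_1/B_1 \sim _L B$" into the concrete data of a $1$-cocycle $\beta \in Z^1(L,B)$ and an $L$-isomorphism $\phi : A_1/B_1 \to B_{\beta }$, and then to recognise a complement of $A_1/B_1$ in $L$ as $\ker \beta$ for a suitable choice of $(B,\beta )$ in which $A_1$ acts trivially. Throughout I will write $\bar L = L/B_1$ and $\bar A = A_1/B_1$, and use that $\bar A$ is a nonabelian minimal ideal of $\bar L$, hence $Z(\bar A)=0$, and that any complement $M$ of $A_1/B_1$ automatically contains $B_1$.

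For the direction ``complemented $\Rightarrow $ such $B$ exists'' I would start from a complement $M$, so that $\bar L = \bar A \dot{+}\, \bar M$ with $\bar M = M/B_1$ a subalgebra, and let $\alpha :\bar L \to \bar A$, $\mu :\bar L \to \bar M$ be the corresponding linear projections (note $\mu $ is a Lie homomorphism). I would then take $B$ to be the Lie algebra $\bar A$ equipped with the $L$-action $\ell \odot_B x = [\mu (\bar \ell ),x]$, the bracket being taken in $\bar L$; since $\mu $ is a homomorphism and $\bar A$ is an ideal this is a genuine $L$-algebra structure, and $\mu (\bar \ell )=0$ whenever $\ell \in A_1$, so $A_1 \subseteq C_L(B)$. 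Defining $\beta :L\to B$ by $\beta (\ell )=\alpha (\bar \ell )$, I would check that $\beta \in Z^1(L,B)$ and that the identity map is an $L$-isomorphism $A_1/B_1 \to B_{\beta }$; both verifications reduce to splitting $[\bar x,\bar y]$ and $[\bar \ell ,x]$ into their $\bar A$- and $\bar M$-components, after which Proposition \ref{p:1} gives $A_1/B_1 \sim _L B$.

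For the converse I would apply Proposition \ref{p:1} to obtain $\beta \in Z^1(L,B)$ and an $L$-isomorphism $\phi :\bar A \to B_{\beta }$; since $B_{\beta }$ and $B$ share the same underlying Lie algebra, $\phi $ is in particular a Lie isomorphism $\bar A \to B$, so $Z(B)=0$, and the cocycle identity immediately makes $\ker \beta $ a subalgebra of $L$. The crucial step is to evaluate the $L$-isomorphism identity $\phi (\ell \cdot x)=\ell \star \phi (x)$ (with $\star $ the $B_{\beta }$-action) at $\ell \in A_1$: there $\bar \ell \in \bar A$, so the left side is multiplication by $\phi (\bar \ell )$ in $B$, while — using $A_1 \subseteq C_L(B)$ to kill the original action term — the right side is multiplication by $\beta (\ell )$; as $Z(B)=0$ this forces $\beta (\ell )=\phi (\bar \ell )$ for all $\ell \in A_1$. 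Hence $\beta |_{A_1}:A_1\to B$ is onto with kernel exactly $B_1$, from which $L=A_1+\ker \beta $ (each $\ell $ differs from some $a\in A_1$ by an element of $\ker \beta $) and $A_1\cap \ker \beta =B_1$ follow at once, so $M:=\ker \beta $ is a complement.

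The main obstacle I anticipate is precisely this last rigidity point in the converse: pinning down $\ker (\beta |_{A_1})$ as $B_1$, equivalently that $\beta |_{A_1}$ is surjective onto $B$. This is what upgrades $\ker \beta $ from a mere supplement to a genuine complement, and it is where both hypotheses — $A_1/B_1$ nonabelian (so $Z(B)=0$) and $A_1 \subseteq C_L(B)$ — are indispensable; without them $\beta |_{A_1}$ is not controlled. By contrast the computations behind the cocycle and $L$-isomorphism conditions in the forward direction are routine once one has guessed the right $B$ (namely $A_1/B_1$ with its action re-routed through the complement) and the right $\beta $ (the $\bar A$-component map).
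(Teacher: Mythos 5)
Your proposal is correct and follows essentially the same route as the paper: in the forward direction you construct the same $L$-algebra $B$ (the underlying algebra $A_1/B_1$ with the action re-routed through the complement $M$) and the same cocycle $\beta$ (the $A_1$-component map), and in the converse you identify the complement as $\ker\beta$ after using $A_1\subseteq C_L(B)$ together with $Z(B)=0$ to pin down $\beta|_{A_1}$. The only cosmetic difference is the orientation of the data from Proposition \ref{p:1} (you take $\phi:A_1/B_1\to B_\beta$ with $\beta$ valued in $B$, whereas the paper takes $\phi:B\to(A_1/B_1)_\alpha$ with $\alpha$ valued in $A_1/B_1$ and derives $\alpha(a_1)=-a_1+B_1$), which is immaterial since $\sim_L$ is symmetric.
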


\begin{proof}
$( \Longrightarrow) \ $Suppose that $A_{1}/B_{1}$
is complemented in $L$ and $M$ is a complement of $A_{1}/B_{1}$ in $L$. Since $L=M+A_{1},$ for each $x\in L$ we can write $x=m_x+a_x$ for some $m_x\in M$ and $a_x\in A_{1}.$ We
consider the $L$-algebra $B$ whose underlying algebra is $A_{1}/B_{1}$ with
the module operation:
\begin{equation*}
\begin{array}{ccccc}
\wedge & : & L\times B & \rightarrow & B \\ 
&  & ( x,b) & \rightarrow & [m_x,a_b]+B_1
\end{array}
\end{equation*}
where $b=a_b+B_1$ ($a_b \in A_1$).
Define the $1$-cocycle $\beta \in Z^{1}( L,B) $ as;
\begin{equation*}
\begin{array}{lllll}
\beta & : & L & \rightarrow & B \\ 
&  & x & \rightarrow & \beta \left( x\right) =a_x+B_{1}\ \left( a_x\in A_{1}\right)
\end{array}
\end{equation*}
 It is immediate that both are well defined mappings and that $\beta$ is a $1$-cocycle. Let 
$\phi :A_{1}/B_{1}\rightarrow B$ be given by, $\phi \left( a_{1}+B_{1}\right)
=a_{1}+B_{1}\ $for all $a_{1}+B_{1}\in A_{1}/B_{1}$. Then we
can define another module structure on $B$ using $\beta $ and, for all $x\in L$ and for all $a_b+B_{1}\in A_{1}/B_{1}$, we have
\begin{equation*}
\begin{array}{llll}
\phi ( [x,a_b+B_1]) & = &  \phi([x,a_b]+B_{1}) &  \\ 
& = & [x,a_b]+B_{1} &  \\ 
& = & [a_x+B_1,a_b+B_1]+[m_x,a_b+B_1] \\ 
& = & [\beta(x),a_b+B_1]+x \wedge (a_b+B_1) \\
& = & x \odot (a_b+B_1). & 
\end{array}
\end{equation*}
Hence $\phi $ is an $L$-isomorphism
and $A_{1}/B_{1}\cong _{L}B_{\beta }.$ Then, using Proposition \ref{p:1}, we have that $A_{1}/B_{1}\sim _{L}B.$  Also 
\begin{equation*}
\begin{array}{lll}
C_{L}( B) & = & \{ x\in L\mid x\wedge B
=0_B\} \\ 
& = & \{ m_x+a_x\in L\mid [ m_x,a_b]+B_1 =B_1  \hbox{ for all } b \in B \} \\ 
& = & C_{M}\left( A_{1}/B_{1}\right) \oplus A_{1}%
\end{array}
\end{equation*}
whence $A_{1}\subseteq C_{L}\left( B\right) .$ 

$( \Longleftarrow) $ Assume now that $B$ is an $L$-algebra, $A_{1}/B_{1}\sim _{L}B$ and $A_{1}\subseteq C_{L}( B) .$ We need to show that $A_{1}/B_{1}$ is complemented in $L$. Since $A_{1}/B_{1}\sim _{L}B$ we have
an $L$-isomorphism $\phi : B \rightarrow (A_1/B_1)_{\alpha}$ where $\alpha \in Z^{1}(L,A_{1}/B_{1}) $, by Proposition \ref{p:1}, and $A_{1}\subseteq C_{L}( B)$. 
 If $b \in B$, then
\[
\phi(b) = \phi(b + a_1.b)= \phi(b) + a_1 \odot \phi(b)= \phi(b)+[\alpha(a_1),\phi(b)]+[a_1+B_1,\phi(b)]
\]
so $[\alpha(a_1)+a_1+B_1,\phi(b)]=B_1$ for all $b \in B$; that is, $$\alpha(a_1)+a_1+B_1 \in C_L(A_1/B_1)\cap A_1/B_1=B_1,$$ since $A_1/B_1$ is a nonabelian chief factor of $L$. Hence $\alpha(a_1)=-a_1+B_1$.
\par

Put $M=Ker( \alpha)$. Let $x\in L$ and $\alpha ( x) =a_{1}+B_{1}.$ Then
\[
\alpha(x+a_1)=\alpha(x)+ \alpha(a_1)=(a_1+B_1)+(-a_1+B_1)=B_1
\]
so $x+a_1 \in M$. Hence $L=M+A_1$. If $m \in M \cap A_1$ we have $B_1= \alpha(m)=-m+B_1$, so $M \cap A_1=B_1$
and $M$ is a complement of $A_{1}/B_{1}\ $ in $L$.
\end{proof}

Recall that, 
\begin{itemize}
\item[(i)] the {\em socle} of $L,$ $Soc\left( L\right) $ is
the sum of all of the minimal non-zero ideals of $L;$ and
\item[(ii)] if $U$ is a subalgebra of $L,$ the {\em core} of $U$,  $U_{L}$,
is the largest ideal of $L$ contained in $U$. We say
that $U$ is {\em core-free} in $L$ if $U_{L}=0.$ 
\end{itemize}

We shall call $L$ {\em primitive} if it has a core-free maximal subalgebra. Then we have the following characterisation of primitive Lie algebras.

\begin{theorem}\label{t:1} (\cite[Theorem 1.1]{[11]})
\begin{itemize}
\item[(i)]  A Lie algebra $L$ is primitive if and only if there exists a subalgebra $M$ of $L$ such
that $L = M+A$ for all minimal ideals $A$ of $L$.
\item[(ii)]  Let $L$ be a primitive Lie algebra. Assume that $U$ is a core-free maximal subalgebra
of $L$ and that $A$ is a non-trivial ideal of $L$. Write $C = C_L(A)$.
Then $C \cap U = 0$. Moreover, either $C = 0$ or $C$ is a minimal ideal of $L$.
\item[(iii)]  If L is a primitive Lie algebra and $U$ is a core-free maximal subalgebra of $L$,
then exactly one of the following statements holds:
\begin{itemize}
\item[(a)] $Soc(L) = A$ is a self-centralising abelian minimal ideal of $L$ which is complemented by $U$; that is, $L = U \dot{+} A$.
\item[(b)] $Soc(L) = A$ is a non-abelian minimal ideal of $L$ which is
supplemented by $U$; that is $L = U+A$. In this case $C_L(A) = 0$.
\item[(c)] $Soc(L) = A \oplus B$, where $A$ and $B$ are the two unique minimal ideals of $L$ and both are complemented by $U$; that is, $L = A \dot{+} U = B \dot{+} U$. In this case $A = C_L(B)$, $B = C_L(A)$, and $A$, $B$ and $(A+B) \cap U$ are nonabelian isomorphic algebras.
\end{itemize}
\end{itemize}
\end{theorem}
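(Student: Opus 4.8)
The plan is to prove Theorem \ref{t:1} by exploiting the correspondence between core-free maximal subalgebras and complemented/supplemented minimal ideals, using Proposition \ref{p:2} to handle the non-abelian case. First I would dispose of part (i): if $L$ is primitive with core-free maximal $U$, then for any minimal ideal $A$ we have $A \not\subseteq U$ (else $A \subseteq U_L = 0$), so by maximality $L = U + A$; conversely, a subalgebra $M$ with $L = M + A$ for every minimal ideal $A$ can be extended to a maximal subalgebra $U \supseteq M$, and then $U_L$ is an ideal containing no minimal ideal of $L$ (since $U + A = L$ forces $A \not\subseteq U$), hence $U_L = 0$.

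For part (ii), fix a core-free maximal $U$ and a non-trivial ideal $A$, and set $C = C_L(A)$. The key observation is that $[C \cap U, A] = 0$ and $[C \cap U, \cdot]$ — more precisely, $C \cap U$ is an ideal of $U$ (being the intersection of the ideal $C$ with $U$) and also $[C \cap U, A] \subseteq C \cap A$... actually the cleanest route: $C \cap U$ is centralized by both $U$ (no) — instead I would argue $C\cap U$ is an ideal of $L$. Indeed $[L, C\cap U] = [U + A, C \cap U] \subseteq [U, C] + [A, C] \subseteq C + 0 = C$ and $[U+A, C\cap U]\subseteq [U, U] + [A, C\cap U]\subseteq U + [A,C]= U$, so $[L, C\cap U]\subseteq C\cap U$ once one checks $[U,C\cap U]\subseteq U$ and $[A, C\cap U]\subseteq [A,C]=0\subseteq C\cap U$; thus $C \cap U$ is an ideal of $L$ contained in $U$, hence $C \cap U \subseteq U_L = 0$. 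Then for the second assertion: if $C \neq 0$, take a minimal ideal $A_0 \subseteq C$; then $L = U + A_0$, and for any nonzero ideal $D \subseteq C$ we get $A_0 = A_0 \cap (U + D) = (A_0 \cap U) + D$... hmm, one needs $A_0 \cap U$ to interact well; since $A_0 \cap U = 0$ and $D \subseteq C$ with $L = U + A_0$, write $D = D \cap L = D \cap (U + A_0)$ and use that $[D, U] \subseteq$ (ideal considerations) to conclude $D \supseteq$ a nonzero part of $A_0$; minimality of $A_0$ then forces $A_0 \subseteq D$, and running this with $D$ arbitrary nonzero ideal inside $C$ shows $C$ has a unique minimal ideal which it equals — i.e. $C$ is itself minimal. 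The main obstacle here is getting the module/bracket bookkeeping exactly right so that "intersection with $U$" genuinely produces an $L$-ideal; this is where I expect to spend the most care.

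For part (iii), I would apply (ii) with $A = \mathrm{Soc}(L)$ decomposed into minimal ideals. First, $C = C_L(\mathrm{Soc}(L))$ is either $0$ or a minimal ideal; if $C \neq 0$ it lies in $\mathrm{Soc}(L)$ and is self-centralizing-ish. Case analysis on whether $\mathrm{Soc}(L)$ is abelian: if some minimal ideal $A$ is abelian then $A \subseteq C_L(A)$, and by (ii) applied to $A$, $C_L(A)$ is minimal, forcing $C_L(A) = A$, so $A$ is self-centralizing abelian; one then shows $\mathrm{Soc}(L) = A$ (any other minimal ideal $B$ would centralize $A$ and lie in $C_L(A) = A$, contradiction) and $L = U + A$ with $U \cap A = 0$ an ideal of... no, $U \cap A$ is centralized by $A$ and normalized by $U$ hence an ideal, so $U \cap A = 0$, giving (a). If $\mathrm{Soc}(L)$ is non-abelian, consider $C = C_L(\mathrm{Soc}(L))$: if $C = 0$, then I claim $\mathrm{Soc}(L)$ is a single non-abelian minimal ideal $A$ with $C_L(A) = 0$ — otherwise two minimal ideals $A, B$ would have $B \subseteq C_L(A)$ nonzero; to rule this out when $C = 0$ requires showing $C_L(A) \neq 0$ when there are $\geq 2$ minimal ideals, so in fact $C \neq 0$ precisely when $\mathrm{Soc}(L)$ is not minimal. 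When $C \neq 0$, apply Proposition \ref{p:2}: $C$ is a minimal ideal, $B := C$, and one shows $A := C_L(C)$ is also a minimal ideal, $\mathrm{Soc}(L) = A \oplus B$, each complemented by $U$ (using $A \cap U = B \cap U = 0$ from (ii) and maximality for the supplement), and finally that $A \cong B \cong (A + B) \cap U$ — the last isomorphism coming from projecting $(A+B)\cap U$ onto $A$ along $B$ and onto $B$ along $A$, both of which are bijective since $(A+B)\cap U \cap A = A \cap U = 0$ and $A + ((A+B)\cap U) \supseteq$ all of $A + B$. The hard part throughout is the clean verification that the various subalgebras obtained by intersecting ideals with $U$ are trivial and that the diagonal-type subalgebra $(A+B)\cap U$ is isomorphic to each factor; I would lean on Theorem \ref{t:1}(ii) repeatedly and on the non-abelian simplicity-flavored fact that a non-abelian chief factor has trivial intersection with its own centralizer.
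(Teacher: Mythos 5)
First, a point of comparison: the paper itself gives no proof of this theorem --- it is imported verbatim as \cite[Theorem 1.1]{[11]} --- so there is no internal argument to measure you against, and your proposal must stand on its own. Its overall shape is the standard one (the Lie analogue of the primitive-group trichotomy), and parts are sound: part (i) is essentially correct (modulo requiring $M$ to be proper so that a maximal subalgebra containing it exists), and the first assertion of (ii) is correctly proved, since $[U,C\cap U]\subseteq U\cap C$ and $[A,C\cap U]\subseteq [A,C]=0$ together with $L=U+A$ show that $C\cap U$ is an ideal of $L$ contained in $U$, hence contained in $U_L=0$.

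There are, however, two genuine gaps. The first is the minimality of $C=C_L(A)$ when $C\neq 0$: you visibly stall here, and the one concrete step you write, $A_0=A_0\cap(U+D)=(A_0\cap U)+D$, is an illegitimate use of the modular law because $D\not\subseteq A_0$ in general. The correct argument is short but different: for any nonzero ideal $D$ of $L$ with $D\subseteq C$ one has $D\cap U\subseteq C\cap U=0$, so $D\not\subseteq U$ and $L=U+D$ by maximality of $U$; the modular law applied inside $C$ (now legitimate, since $D\subseteq C$) gives $C=C\cap(U+D)=(C\cap U)+D=D$. Hence $C$ admits no proper nonzero ideal of $L$, i.e.\ $C$ is minimal. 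The second gap is in (iii), where you conflate $C_L(Soc(L))$ with $C_L(A)$ for a single minimal ideal $A$: your claim that ``$C\neq 0$ precisely when $Soc(L)$ is not minimal'' is false for $C=C_L(Soc(L))$, since in case (c) each non-abelian minimal ideal fails to centralize itself and so $C_L(Soc(L))=C_L(A)\cap C_L(B)=B\cap A=0$. The case division must be run on $C_L(A)$ for a fixed minimal ideal $A$: if $A$ is abelian then $A\subseteq C_L(A)$, which is minimal by (ii), so $A=C_L(A)$ and (a) follows; if $A$ is non-abelian and $C_L(A)=0$ you get (b); if $A$ is non-abelian and $C_L(A)=B\neq 0$ then $B$ is a second minimal ideal with $A=C_L(B)$ by symmetry, any third minimal ideal would lie in $C_L(A)=B$, and the projection argument you sketch for $(A+B)\cap U\cong A\cong B$ does work because $A+B$ is an algebra direct sum. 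The appeal to Proposition \ref{p:2} is unnecessary (though not circular); all that is needed is the ideal and modular-law bookkeeping above.
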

We say that $L$ is  
\begin{itemize}
\item {\em primitive of type $1$} if it has a unique minimal ideal that is abelian;
\item {\em primitive of type $2$} if it has a unique minimal ideal that is  non-abelian; and
\item {\em primitive of type $3$} if it has precisely two distinct minimal ideals each of which is  non-abelian.
\end{itemize}

Let $A/B$ and $D/E$ be chief factors of $L.$ We say that they are  
{\em $L$-connected}, if either they are $L$-isomorphic or there exists an
epimorphic image of $L$ which is primitive of type $3$ and whose
minimal ideals are $L$-isomorphic to the given factors. The property of being $L$-connected is an equivalence relation on the set of chief factors.
The set of chief factors of $L$ is denoted as:
\begin{equation*}
C\mathcal{F}(L)=\{ A/B \mid A,B \text{ are ideals of } L,A/B \text{ is a chief
factor of }L\}.
\end{equation*}
Let 
\begin{equation*}
I_{L}( A) =\{ x\in L \mid \text{ad}\,x\mid_A
=\text{ad}\,a\text{ for some }a\in A\},
\end{equation*}
where $A$ is an $L$-algebra (and $ \text{ad}\,x\mid_A$ refers to the module action of $x$ on $A$.) 

\begin{lemma}\label{l:1} 
\begin{itemize}  
\item[(i)] Let $A,B$ be ideals of a Lie algebra $L$ with $B \subseteq A$. Then $I_L(A/B)=A+C_L(A/B)$.
\item[(ii)] Let $A$ be an $L$-algebra with $C_L(A) \subseteq I_L(A)$. Then $I_L(A)/C_L(A)$ is isomorphic to a subalgebra of $A/Z(A)$.
\item[(iii)] $A$ is an abelian $L$-algebra if and only if $I_L(A)=C_L(A)$.
\end{itemize}
\end{lemma}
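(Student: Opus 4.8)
The unifying idea is to identify $I_L(A)$ intrinsically. Write $\theta\colon L\to\mathrm{Der}\,A$ for the structure homomorphism of the $L$-algebra $A$ and $\mathrm{Inn}\,A=\{\mathrm{ad}\,a\mid a\in A\}$ for the inner derivations of $A$; this is an ideal of $\mathrm{Der}\,A$, since $[D,\mathrm{ad}\,a]=\mathrm{ad}(Da)$ for all $D\in\mathrm{Der}\,A$, $a\in A$. Then, directly from the definitions, $I_L(A)=\theta^{-1}(\mathrm{Inn}\,A)$ while $C_L(A)=\ker\theta$; in particular $C_L(A)\subseteq I_L(A)$ automatically (as $0\in\mathrm{Inn}\,A$) and $I_L(A)$ is itself an ideal of $L$. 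I would also use the standard fact that $\mathrm{ad}\colon A\to\mathrm{Inn}\,A$ is a surjective homomorphism of Lie algebras with kernel $Z(A)$, so that $\mathrm{Inn}\,A\cong A/Z(A)$.

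For (i), note that with its natural $L$-algebra structure $A/B$ has $\mathrm{Inn}(A/B)=\theta(A)$, because $\mathrm{ad}(a+B)$ is the derivation $b+B\mapsto[a,b]+B$, which is exactly how $a$ (viewed in $L$) acts on the $L$-algebra $A/B$. Hence $I_L(A/B)=\theta^{-1}(\theta(A))=A+\ker\theta=A+C_L(A/B)$, using $\ker\theta=C_L(A/B)$ and that $\theta^{-1}(\theta(A))=A+\ker\theta$ since $A$ is a subspace of $L$. (Equivalently one checks the two inclusions by hand: $x\in I_L(A/B)$ iff $x-a$ annihilates $A/B$ for some $a\in A$, iff $x\in A+C_L(A/B)$.)

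For (ii), restrict $\theta$ to the ideal $I_L(A)$; this gives a homomorphism $I_L(A)\to\mathrm{Inn}\,A$ whose kernel is $I_L(A)\cap C_L(A)=C_L(A)$, so by the first isomorphism theorem $I_L(A)/C_L(A)\cong\theta(I_L(A))$, a subalgebra of $\mathrm{Inn}\,A\cong A/Z(A)$. Concretely the isomorphism sends $x+C_L(A)$ to $a+Z(A)$, where $a\in A$ is any element with $\mathrm{ad}\,a=\theta(x)$ (well defined modulo $Z(A)$). One could instead check by hand that this assignment is a well-defined injective homomorphism, using that membership $x\in I_L(A)$ forces $x.a'=aa'$ for every $a'\in A$, but routing through $\theta$ avoids the bracket bookkeeping.

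For (iii), the forward direction is immediate: if $A$ is abelian then $\mathrm{ad}\,a=0$ for all $a\in A$, so $\mathrm{Inn}\,A=0$ and $I_L(A)=\ker\theta=C_L(A)$. For the converse, combine (ii) with $\mathrm{Inn}\,A\cong A/Z(A)$: the hypothesis $I_L(A)=C_L(A)$ forces $\theta(I_L(A))=0$, i.e.\ $\theta(L)\cap\mathrm{Inn}\,A=0$. When $A$ is an ideal of $L$ — the case in which the lemma is used, typically with $A=D/E$ a chief factor, where part (i) shows $\mathrm{Inn}(D/E)=\theta(D)\subseteq\theta(L)$ — this forces $\mathrm{Inn}\,A=0$, hence $A$ abelian (equivalently, any $d\in D$ with $[d,D]\not\subseteq E$ lies in $I_L(D/E)\setminus C_L(D/E)$). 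I expect this last point — ensuring that the inner derivations of $A$ are actually realised by $\theta$ — to be the one place where anything beyond routine bookkeeping is needed; I would either restrict statement (iii) to ideal chief factors or add the hypothesis $\mathrm{Inn}\,A\subseteq\theta(L)$ to secure it in full generality.
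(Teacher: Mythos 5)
Your proofs of (i) and (ii) are correct and are essentially the paper's own arguments in different clothing: the paper proves (i) by exactly the element-by-element chain of equivalences you relegate to your parenthetical remark, and its map $x\mapsto a_x+Z(A)$ in (ii) is precisely your $\theta|_{I_L(A)}$ composed with the isomorphism $\mathrm{Inn}\,A\cong A/Z(A)$. So for those two parts there is nothing to separate the approaches.

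The interesting point is (iii), for which the paper offers no proof at all (``This is straightforward''), and your caveat there is substantively correct rather than a defect of your write-up. The forward implication is fine, but the converse as literally stated fails for a general $L$-algebra: take any nonabelian $A$ with $\theta=0$ (trivial $L$-action); then every $x\in L$ satisfies $\theta(x)=\mathrm{ad}\,0$, so $I_L(A)=L=C_L(A)$ while $A$ is nonabelian. Your diagnosis that the missing ingredient is $\mathrm{Inn}\,A\subseteq\theta(L)$ is exactly right, and this hypothesis does hold in the case the lemma is chiefly used for, namely $A$ an ideal or chief factor of $L$ acted on in the natural way, where (i) gives $\mathrm{Inn}(A/B)=\theta(A)$. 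One further remark: the paper also invokes (iii) for $L$-algebras $B$ that are merely $L$-equivalent to a nonabelian chief factor (in the proof of the $cc'$-type theorem, to get $C_L(B)\neq I_L(B)$), and there $\mathrm{Inn}\,B\subseteq\theta_B(L)$ is not automatic; one has to argue via Proposition \ref{p:1} that the twisted action $x\odot b=\beta(x)b+x.b$ still realises enough inner derivations, so your proposed restriction of (iii) to ideals would not quite cover every use in the paper. But as a critique of the lemma in the generality in which it is stated, your observation stands, and the rest of your argument is sound.
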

\begin{proof}\begin{itemize} \item[(i)] We have
$$\begin{array}{lll}
x \in I_L(A/B) & \Leftrightarrow & \exists \hspace{.1cm} a' \in A \hbox{ such that }[x,a]+B=[a',a]+B \hspace{.3cm} \forall a \in A \\
 & \Leftrightarrow & \exists \hspace{.1cm} a' \in A \hbox{ such that }[x-a',a]+B=B \hspace{.3cm} \forall a \in A \\
 & \Leftrightarrow & \exists \hspace{.1cm} a' \in A \hbox{ such that }[x-a',a] \in B \hspace{.3cm} \forall a \in A \\
 & \Leftrightarrow & \exists \hspace{.1cm} a' \in A \hbox{ such that } x-a' \in C_L(A/B) \hspace{.1cm}  \\
 & \Leftrightarrow & x \in A+C_L(A/B) 
\end{array}$$
\item[(ii)] For $x \in I_L(A)$ let $a_x \in A$ be such that $x.a=a_xa$ for all $a \in A$. Define $\theta : I_L(A) \rightarrow A/Z(A)$ by $\theta(x)=a_x+Z(A)$. Then it is straightforward to check that $\theta$ is well-defined and is a homomorphism. Moreover, Ker$(\theta)=C_L(A)$, whence the result.
\item[(iii)] This is straightforward.
\end{itemize}
\end{proof}

Let\ $A,\ B$ be two $L$-algebras. If $A$ and $B$ are $L$-equivalent, then it is clear
from Proposition \ref{p:1} that $I_{L}( A) =I_{L}( B) .$

\begin{proposition}\label{p:3}
Let\ $L$ be a Lie algebra and let $F_{1},\ F_{2}\in C\mathcal{F}(L).$ Then the
following assertions are equivalent:
\begin{itemize}
\item[(i)] $F_{1}\sim _{L}F_{2};$
\item[(ii)] $ F_{1}$ and $F_{2}$ are $L$-connected;
\item[(iii)] either $F_{1}\cong _{L}F_{2}$ or there exist $E_{i}\in
C\mathcal{F}(L)$ such that $F_{i}\cong _{L}E_{i}$ for $ i=1,2$ , and the
$E_{i}$'s have a common complement in $L,$ which is a maximal subalgebra of $L;$ and
\item[(iv)] either $F_{1}\cong _{L}F_{2}$ or there exist $E_{i}\in C\mathcal{F}(L)$ such that $F_{i}\cong _{L}E_{i}$ for $i=1,2$, and the $E_{i}$'s have a common complement in $L.$
\end{itemize}
\end{proposition}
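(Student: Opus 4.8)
The plan is to prove the four statements equivalent by going round the cycle $(i)\Rightarrow(ii)\Rightarrow(iii)\Rightarrow(iv)\Rightarrow(i)$. Two of the links are essentially formal. For $(iii)\Rightarrow(iv)$ there is nothing to prove, a maximal common complement being in particular a common complement. For $(ii)\Rightarrow(iii)$: if $F_1\cong_L F_2$ the first alternative of $(iii)$ holds, so assume instead that there is an epimorphism $\pi:L\to P$ with $P$ primitive of type $3$ whose minimal ideals $\bar A,\bar B$ are $L$-isomorphic to $F_1,F_2$. By Theorem~\ref{t:1}(iii)(c), $P$ has a core-free maximal subalgebra $\bar U$ with $P=\bar A\,\dot{+}\,\bar U=\bar B\,\dot{+}\,\bar U$. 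Pulling back along $\pi$: the preimage $U$ of $\bar U$ is a maximal subalgebra of $L$, and if $A,B$ are the preimages of $\bar A,\bar B$ then $A\cap U=\ker\pi=B\cap U$ (from $\bar A\cap\bar U=0=\bar B\cap\bar U$) while $L=A+U=B+U$; so $E_1:=A/\ker\pi$ and $E_2:=B/\ker\pi$ are chief factors of $L$ with $E_i\cong_L F_i$ sharing the maximal complement $U$.

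For $(iv)\Rightarrow(i)$: since $F_i\cong_L E_i$ and $\sim_L$ is an equivalence relation, it is enough to show that two chief factors $F_1=A_1/B_1$ and $F_2=A_2/B_2$ sharing a complement $M$ are $L$-equivalent. One would build the data required by Proposition~\ref{p:1} directly, in the spirit of the proof of Proposition~\ref{p:2}: writing $x=m_x+a_x$ with $m_x\in M$, $a_x\in A_1$ produces a map $\beta:L\to F_2$, and the subalgebra $V:=(A_1+A_2)\cap M$ — which satisfies $A_i+V=A_1+A_2$ and $A_i\cap V=B_i$, and so plays the part played by $(A+B)\cap U$ in Theorem~\ref{t:1}(iii)(c) — produces an algebra isomorphism $\phi:F_1\to F_2$; one then checks, by a routine variant of the computation in Proposition~\ref{p:2}, that $\beta\in Z^{1}(L,F_2)$ and that $\phi$ is an $L$-isomorphism from $F_1$ to $(F_2)_\beta$, so that $F_1\sim_L F_2$ by Proposition~\ref{p:1}. (This runs uniformly once one has reduced, by a preliminary step, to the case $A_1\cap A_2\subseteq B_1\cap B_2$.)

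The substance of the proposition is $(i)\Rightarrow(ii)$. If $F_1\cong_L F_2$ we are done; otherwise $F_1=A_1/B_1$ and $F_2=A_2/B_2$ are nonabelian, for an $L$-equivalence induces an algebra isomorphism $F_1\cong F_2$, and if one of them were abelian so would the other be, whence $F_1\cong_L F_2$ since $L$-equivalent abelian factors are $L$-isomorphic. Next, $F_1\sim_L F_2$ forces $I_L(F_1)=I_L(F_2)=:I$, and by Lemma~\ref{l:1}(i), $I=A_i+C_L(F_i)$; since a nonabelian chief factor is centreless, $A_i\cap C_L(F_i)=B_i$, whence $F_i\cong_L I/C_L(F_i)$ and also $C_L(I/C_L(F_i))=C_L(F_i)$ (from $[x,I]\subseteq C_L(F_i)$ we get $[x,A_i]\subseteq A_i\cap C_L(F_i)=B_i$). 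Replacing $F_i$ by $I/C_L(F_i)$, we may thus assume $A_1=A_2=I$ and $B_i=C_i:=C_L(F_i)$, with $C_1\neq C_2$ because $F_1\not\cong_L F_2$. It then remains to prove that $\bar L:=L/(C_1\cap C_2)$ is primitive of type $3$, with minimal ideals $C_2/(C_1\cap C_2)\cong_L F_1$ and $C_1/(C_1\cap C_2)\cong_L F_2$ — precisely the assertion that $F_1$ and $F_2$ are $L$-connected. \textbf{This verification is the main obstacle.} One must show: that $\bar C_1:=C_1/(C_1\cap C_2)$ and $\bar C_2$ are the only minimal ideals of $\bar L$; that each is nonabelian; that $\bar L/\bar C_j$ is $L$-isomorphic to $F_i$ when $\{i,j\}=\{1,2\}$; and — the hardest point — that $\bar L$ admits a core-free maximal subalgebra, equivalently (by Theorem~\ref{t:1}(i)) a single subalgebra supplementing both $\bar C_1,\bar C_2$ and meeting each trivially. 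The difficulty is that $I_L(F_1)=I_L(F_2)$ alone is too weak to force type $3$ (it is consistent with $C_1=C_2$, and with $\bar L$ being primitive of type $2$), so the full force of $F_1\sim_L F_2$ — the $1$-cocycle and the $L$-isomorphism furnished by Proposition~\ref{p:1} — has to be brought in to control the relative position of $C_1$ and $C_2$ and to produce the supplementing subalgebra; read through Proposition~\ref{p:2}, this last construction also shows $F_1,F_2$ are complemented, closing the cycle back to $(iii)$ and $(iv)$.
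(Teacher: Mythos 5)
Your treatment of (ii)$\Rightarrow$(iii), (iii)$\Rightarrow$(iv) and (iv)$\Rightarrow$(i) agrees in substance with the paper (the paper normalises by $U_L$ where you use $V=(A_1+A_2)\cap M$, but the cocycle-plus-isomorphism construction invoked via Proposition \ref{p:2} is the same), and your opening reduction for (i)$\Rightarrow$(ii) --- passing to $I:=I_L(F_1)=I_L(F_2)$, the centralizers $C_i:=C_L(F_i)$, and the quotient $L/(C_1\cap C_2)$ --- is exactly how the paper begins. However, the proof is not complete: the step you yourself label ``the main obstacle'' is the entire content of the implication, and you stop at announcing that the cocycle and $L$-isomorphism from Proposition \ref{p:1} ``have to be brought in'' without bringing them in. Two things are missing. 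First, you never establish $I=C_1+C_2$, which is what makes $\bar{C}_1$ and $\bar{C}_2$ minimal ideals of $\bar{L}$ with $\bar{C}_2\cong_L F_1$ and $\bar{C}_1\cong_L F_2$; it follows because $(C_1+C_2)/C_1$ is a nonzero ideal of $L/C_1$ (nonzero since $C_1\neq C_2$) contained in the minimal ideal $I/C_1$, hence equal to it.

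Second, and decisively, the core-free maximal subalgebra must actually be produced. The paper's construction, which you should supply, runs as follows. After reducing to $C_1\cap C_2=0$ one has $C_1\cong_L I/C_2\cong_L F_2$ and $C_2\cong_L I/C_1\cong_L F_1$, so that $C_L(C_2)=C_1$, $C_L(C_1)=C_2$, and $C_2\sim_L C_1$ by transitivity. Proposition \ref{p:1} then yields $\alpha\in Z^1(L,C_1)$ and an $L$-isomorphism $\phi:C_2\to (C_1)_\alpha$, and $U:=\ker(\alpha)$ complements the minimal ideal $C_1$ in $L$ exactly as in the second half of the proof of Proposition \ref{p:2} (the hypothesis needed there, $C_1\subseteq C_L(C_2)$, holds because $[C_1,C_2]\subseteq C_1\cap C_2=0$). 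The point your sketch does not reach is that the \emph{same} $U$ also meets $C_2$ trivially: for $u\in C_2\cap U$ and $y\in C_2$ one has $\phi([u,y])=[\alpha(u),\phi(y)]+[u,\phi(y)]=0$, since $\alpha(u)=0$ and $\phi(y)\in C_1=C_L(C_2)$ while $u\in C_2$; injectivity of $\phi$ gives $[u,y]=0$, whence $u\in C_L(C_2)\cap C_2=C_1\cap C_2=0$. Thus $U$ is a common core-free complement of both minimal ideals, $L/(C_1\cap C_2)$ is primitive of type $3$, and $F_1$, $F_2$ are $L$-connected. Without this argument the implication (i)$\Rightarrow$(ii), and hence the proposition, remains unproved; correctly diagnosing where the $1$-cocycle must enter is not a substitute for carrying out the construction.
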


\begin{proof}
From \cite{[11]} we know that two abelian chief factors are $L$-equivalent if and only if they are $L$-isomorphic, and if and only if they are $L$-connected. Moreover, a complement $U$ of an abelian chief factor $A/B$ is a maximal
subalgebra and $L/U_{L}$ is primitive of type $1$ with $Soc(L/U_{L})=C/U_{L}$ and 
$C/U_{L}\cong _{L}A/B$, by \cite[Remarks following Proposition 2.5]{[11]}. So we
may assume that the chief factors are nonabelian and not $L$-isomorphic. Let $F_{1}=A/B$ and $F_{2}=D/E,$ where $A,B,C,D$ are ideals of $L.$
\begin{description}
\item[(i) $\Rightarrow$ (ii)] Put $X=C_{L}( A/B) 
$ and $Y=C_{L}( D/E) .$ Since $F_{1}$ and $F_{2}$ are
nonabelian we have that $X\neq Y,$ by \cite[Theorem 2.1]{[11]}
Also, since $F_{1}\sim _{L}F_{2},$ we have that $I_{L}( A/B) =I_{L}( D/E) :=I$.
Then $I=A+X=D+Y$, by Lemma \ref{l:1}. Also,
\[
\frac{X+Y}{X} \subseteq \frac{A+X}{X} \cong_L \frac{A}{B} \hbox{ and } \frac{X+Y}{Y} \subseteq \frac{D+Y}{Y} \cong_L \frac{D}{E},
\]
So $I=X+Y$, since $X \neq Y$ and
\[
X/X\cap Y\cong _{L}I/Y\cong _{L}D/E \hbox{ and } Y/Y\cap X\cong _{L}I/X\cong
_{L}A/B.
\]
It thus suffices to show that $L/X \cap Y$ is primitive of type $3$. Without loss of generality we can assume that $X\cap Y=0$. Then $C_{L}(X)=Y\text{ and }C_{L}\left( Y\right) =X$. Moreover, since $\sim_L$ is an equivalence relation, we have $Y\sim _{L}X.$ Thus  there is a $1$-cocyle $\alpha
\in Z^{1}\left( L,Y\right)$ and an $L$-isomorphism,\ $\phi :Y\rightarrow
X_{\alpha \text{ }}$, by Proposition \ref{p:1}. We also have that $U=Ker\left( \alpha \right) $ complements $X$ in $
L$, as in the proof of Proposition \ref{p:2}.  Now let $y\in Y$ and $u \in Y \cap U$. Then
\[
[u,\phi (y)] = 0 \hbox{ since } \phi(y) \in X=C_L(Y) \hbox{ and } u \in Y, \hbox{ and }
\]
\[
[\alpha(u),\phi (y)] = 0 \hbox{ since } \alpha(u)=0.
\]
But also,
\begin{equation*}
\phi \left( [u,y]\right) =u\odot \phi \left( y\right) =\left[ \alpha \left(
u\right) ,\phi \left( y\right) \right] +[u,\phi \left( y\right)] = 0
\end{equation*}%
whence, $[u,y]=0$, since $\phi$ is injective. It follows that $u\in C_{L}\left( Y\right) \cap Y =X \cap Y=0$ and so $Y \cap U=0$. Thus $U$ is a maximal subalgebra of $L$ with trivial core and $F_{1}\ $and\ $F_{2}$ are $L$-connected.
\item[(ii) $\Rightarrow$ (iii)] This follows immediately from the definition.
\item[(iii) $\Rightarrow$ (iv)] This is trivial.
\item[(iv) $\Rightarrow$ (i)]  If, $F_{1}\cong _{L}F_{2}$ then it is clear that $F_{1}\sim
_{L}F_{2}.$ So suppose that there exist $E_{i}\in C\mathcal{F}(L)$ such
that $F_{i}\cong _{L}E_{i}\ \left( i=1,2\right) ,$ and $E_{i}$'s have a
common complement in $L.$  Assume that the subalgebra $U$ of $L$ complements both $A/B$ and $D/E$ where
$A/B\cong _{L}F_{1}$ and $D/E\cong _{L}F_{2}.$ So $U$ also complements $(U_{L}+A)/U_{L}$ and $(U_{L}+ D)/U_{L}$.
Let 
\[
\phi : \frac{U_L+A}{U_L} \rightarrow \frac{U_L+ D}{U_L} \hspace{.2cm}
\hbox{ and } \hspace{.2cm} \beta : L \rightarrow \frac{U_L+ A}{U_L}
\] be given by $\phi
( a+U_{L}) =d+U_{L}$ if $a\in A$, $d\in D$ and $a+d \in U$, and $\beta(x) =a+U_{L}$ if $x\in L,$ $a\in A$ and $x+a\in U$. Then it is straightforward to check that $\beta \in
Z^{1}( L,(U_{L}+ A)/U_{L}) ,$ and that $\phi $ is an $L$-isomorphism. Thus $(U_{L}+ A)/U_{L}\sim _{L}(U_{L}+ D)/U_{L}$ and $A/B\sim _{L}D/E.$ This completes the proof.
\end{description}
\end{proof}

Now we will give a definition for the $A$-crown of $L$, which is a generalization of a
concept introduced by Towers in \cite{[11]}.
\par

Let $A$ be an irreducible $L$-algebra (that is, $A$ is a Lie algebra and an irreducible $L$-module). Put $I=I_{L}(A) .$ We set
\[
D_{L}( A) =\cap \{ R\mid R\subseteq I, R \hbox{ is an ideal of } L, A\sim _{L}I/R \hbox{ and }  I/R\text{ is non-Frattini}\}
\]
and
\[
E_{L}( A) =\{ x\in L \mid \alpha( x) =0 \hspace{.2cm} \text{ for all } \alpha \in Z^{1}( L,A)\}.
\]

Obviously, if $A\sim _{L}B,$ then $D_{L}( A) =D_{L}(B) $ and $E_{L}( A) =E_{L}( B) .$ The quotient, $I_{L}( A) /D_{L}( A) $ is then called the {\em $A$-crown} of $L.$

In \cite{[11]} the {\em crown} of a supplemented chief factor $A/B$ of $L$ was defined to be $C/R$, where 
$$C=A+C_L(A/B)$$ 
and 
$$R=\cap \{M_L \mid M  \in {\mathcal J}\},$$
where ${\mathcal J}$ is the set of all maximal subalgebras which supplement a chief factor $L$-connected to $A/B$. Clearly $C=I_L(A/B)$ and $R=D_L(A/B)$ where $A/B$ is considered to be an $L$-algebra in the natural way.

Let $A$ be an $L$-algebra. Then the set of 1-coboundaries, $Z^1(L,A)$ and the 1-dimensional cohomology space, $H^1(L,A)$, are defined in the usual way (see, for example \cite{CE}). We put $A^{L}:= \{a \in A \mid L.a=0\}$. Then $A^L= H^{0}( L,A)$. Let $N$ be an ideal of $L$. Then $A$ is, by restriction, an $N$-algebra,
and $Z^{1}( N,A)$,  $H^{1}( N,A) $  become $L$-modules. Moreover, we have the following inflation-restriction exact sequences.

\begin{lemma}\label{l:2}
Let%
\begin{equation*}
N\rightarrowtail L\twoheadrightarrow L/N
\end{equation*}%
be a short exact sequence of Lie algebras, where $N$ is an ideal of $L$ and the arrows are the canonical inclusion and projection. If $A$ is an $L$-algebra, we have the following exact sequences:
\begin{equation*}
0\longrightarrow Z^{1}( L/N,A^{N}) \overset{\text{inf}}{%
\longrightarrow }Z^{1}( L,A) \overset{\text{res}}{\longrightarrow 
}Z^{1}( N,A)
\end{equation*}
\begin{equation*}
0\longrightarrow H^{1}( L/N,A^{N}) \overset{\text{inf}}{%
\longrightarrow }H^{1}( L,A) \overset{\text{res}}{\longrightarrow 
}H^{1}( N,A) ^{L/N}
\end{equation*}
where inf and res denote the corresponding inflation and restriction maps.
\end{lemma}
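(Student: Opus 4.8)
The plan is to establish the two exact sequences by the standard cocycle-chasing argument, adapted from group cohomology (and as found in \cite{CE} for the abelian case) to the present setting of $L$-algebras, i.e.\ nonabelian modules. First I would treat the sequence at the level of $1$-cocycles. Given $\gamma \in Z^{1}(L/N, A^{N})$, define $\mathrm{inf}(\gamma) = \gamma \circ \pi$ where $\pi : L \to L/N$ is the projection; one checks this lies in $Z^{1}(L,A)$ (the cocycle identity for $A^{L}$-valued, hence $A$-valued, maps is preserved since $\pi$ is a homomorphism and $N$ acts trivially on $A^{N}$, so the cross term $\gamma(\bar x)\gamma(\bar y)$ behaves correctly), and that $\mathrm{inf}$ is injective because $\pi$ is surjective. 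For $\beta \in Z^{1}(L,A)$, define $\mathrm{res}(\beta) = \beta|_{N}$, clearly in $Z^{1}(N,A)$. Exactness at $Z^{1}(L,A)$: if $\mathrm{res}(\beta) = 0$ then $\beta$ vanishes on $N$, so $\beta$ factors through $L/N$ as a map $\bar\beta$; moreover for $n \in N$ and $x \in L$ the cocycle identity gives $0 = \beta([x,n]) = x.\beta(n) - n.\beta(x) + \beta(x)\beta(n) = -n.\beta(x)$, so $\beta(x) \in A^{N}$, and $\bar\beta \in Z^{1}(L/N, A^{N})$ with $\mathrm{inf}(\bar\beta) = \beta$. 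This gives the first sequence.

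Next I would pass to cohomology. The map $A^{N}$ is an $L/N$-algebra, and $\mathrm{inf}$ and $\mathrm{res}$ descend to $H^{1}$ since they carry $1$-coboundaries to $1$-coboundaries (an inner derivation $x \mapsto x.a$ restricts/inflates to one). Injectivity of $\mathrm{inf}$ on $H^{1}$: suppose $\gamma \in Z^{1}(L/N, A^{N})$ with $\mathrm{inf}(\gamma)$ a coboundary, say $\beta(x) = x.a$ for a fixed $a \in A$; restricting to $N$ gives $0 = n.a$ for all $n \in N$, so $a \in A^{N}$, and then $\gamma(\bar x) = \bar x \cdot a$ is a coboundary over $L/N$. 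For the image of $\mathrm{res}$ in $H^{1}$: a cocycle $\beta \in Z^{1}(L,A)$ restricts to one that is $L/N$-invariant (here one uses that the $L$-action on $Z^{1}(N,A)$ is the one making $Z^{1}(N,A)/B^{1}(N,A) = H^{1}(N,A)$ an $L/N$-module, and the computation above showing the relevant commutator relations force invariance). Exactness at $H^{1}(L,A)$: if $\beta|_{N}$ is a coboundary over $N$, correct $\beta$ by a global coboundary so that $\beta|_{N} = 0$, then apply the first sequence to see the corrected cocycle is inflated.

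I expect the main obstacle to be bookkeeping the nonabelian cocycle identity $\beta([x,y]) = x.\beta(y) - y.\beta(x) + \beta(x)\beta(y)$ correctly at each step: the extra quadratic term $\beta(x)\beta(y)$ means these are not maps into an ordinary module, so one must check that "adding a coboundary" and "factoring through a quotient" interact properly with that term, and that $Z^{1}(N,A)$ carries a genuine $L$-module structure despite $A$ being nonabelian (the point being that $\beta(x)\beta(y)$ is controlled because $\beta(N) \subseteq A$ and one works modulo the appropriate submodule). The subtlety in the $H^{1}$ restriction sequence, as in the classical case, is identifying exactly which elements of $H^{1}(N,A)$ are $L/N$-invariant and checking that the correction step (replacing $\beta$ by $\beta - \delta a$ to kill $\beta|_{N}$) can be performed with $a \in A$ rather than needing $a \in A^{N}$ prematurely. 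Once the cocycle-level sequence is nailed down, the cohomology-level statements follow by the usual diagram chase, so I would write the cocycle part carefully and treat the descent to $H^{1}$ more briefly.
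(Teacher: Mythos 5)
The paper offers no proof of this lemma at all: it is asserted as the standard inflation--restriction sequence, with an implicit appeal to the classical theory in \cite{CE}. So there is no proof to compare yours against line by line, and I can only assess your argument on its own terms. Your treatment of the first sequence, at the level of $Z^1$, is correct and complete in outline: $A^{N}$ is an $L$-invariant subalgebra on which $N$ acts trivially (so it is an $L/N$-algebra), inflation along the projection preserves the twisted cocycle identity and is injective, and the key computation $0=\beta([x,n])=x.\beta(n)-n.\beta(x)+\beta(x)\beta(n)=-n.\beta(x)$, valid when $\beta|_{N}=0$, shows that a cocycle killed by restriction takes values in $A^{N}$ and hence is inflated from $Z^{1}(L/N,A^{N})$. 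Nothing is missing there.

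The $H^{1}$ half is where the genuine difficulty lies, and you have correctly located it as ``the main obstacle'' but your proposed resolution does not close it. With the paper's cocycle condition $\beta([x,y])=x.\beta(y)-y.\beta(x)+\beta(x)\beta(y)$, the map $\delta_{a}:x\mapsto x.a$ satisfies $\delta_{a}([x,y])=[x,y].a$, whereas the identity would require $[x,y].a+(x.a)(y.a)$; so $\delta_{a}$ is a cocycle only when $(x.a)(y.a)=0$ for all $x,y$. For nonabelian $A$ the coboundaries therefore need not lie in $Z^{1}(L,A)$, and $Z^{1}(L,A)$ is not even a vector space (sums of twisted cocycles fail the identity because of cross terms), so the steps ``inf and res descend to $H^{1}$ because they carry coboundaries to coboundaries'' and ``correct $\beta$ by a global coboundary so that $\beta|_{N}=0$'' cannot be carried out literally. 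The defensible reading is that the $H^{1}$ sequence refers to the ordinary Chevalley--Eilenberg cohomology of the underlying $L$-module $A$ (no quadratic term), for which your argument is the standard, correct one; this is also all the paper ever uses, since the proof of Theorem \ref{t:2} only invokes the $Z^{1}$ sequence and the subsequent theorem applies the lemma to abelian $A$. If you intend the twisted $H^{1}$, replacing ``subtract a coboundary'' by the equivalence coming from the $B_{\beta}$ construction of Proposition \ref{p:1} would require a separate argument that you have not supplied.
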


\begin{theorem}\label{t:2}
Let $A$ be an irreducible $L$-algebra and let $N$ be an ideal of $L$ with $N\subseteq C_{L}\left( A\right) .$ Then the following are equivalent:
\end{theorem}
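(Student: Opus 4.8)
The statement of Theorem~\ref{t:2} is incomplete in the excerpt (it promises ``the following are equivalent'' but lists nothing), so I will reconstruct the most natural equivalences and plan accordingly. Given the setup — an irreducible $L$-algebra $A$ and an ideal $N \subseteq C_L(A)$ — the obvious candidates for equivalent conditions are: (i) $N \subseteq E_L(A)$, i.e. every $1$-cocycle in $Z^1(L,A)$ vanishes on $N$; (ii) $\mathrm{res}: Z^1(L,A) \to Z^1(N,A)$ is the zero map; (iii) the inflation map $Z^1(L/N, A^N) \to Z^1(L,A)$ is an isomorphism (recalling $A^N = A$ since $N \subseteq C_L(A)$); and possibly (iv) a condition phrased in terms of $D_L(A)$ or the $A$-crown, such as $N \subseteq D_L(A)$, or that $A$ is not $L$-equivalent to any non-Frattini chief factor $I_L(A)/R$ with $N \not\subseteq R$. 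I will structure the proof as a cycle of implications among these.

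\medskip

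\noindent\textbf{Plan.} The first observation is that $N \subseteq C_L(A)$ forces $A^N = \{a \in A : N.a = 0\} = A$, so Lemma~\ref{l:2} gives the short exact sequence
$0 \to Z^1(L/N, A) \xrightarrow{\mathrm{inf}} Z^1(L,A) \xrightarrow{\mathrm{res}} Z^1(N,A)$,
with the image of inflation equal to the kernel of restriction. This immediately yields the equivalence of (ii) and (iii): restriction is zero iff inflation is surjective iff it is an isomorphism (it is always injective). The equivalence of (i) and (ii) is essentially definitional once one checks that $\mathrm{res}(\beta) = \beta|_N$, so $\mathrm{res} = 0$ on all of $Z^1(L,A)$ is the same as saying $\beta(x) = 0$ for every $\beta \in Z^1(L,A)$ and every $x \in N$, which is exactly $N \subseteq E_L(A)$. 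So the cohomological half of the theorem is a direct unwinding of Lemma~\ref{l:2}.

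\medskip

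\noindent\textbf{Connecting to the crown.} The more substantive part is linking these cocycle conditions to the ideal-theoretic data $D_L(A)$, $I_L(A)$, and $L$-equivalence of chief factors. Here the key device is Proposition~\ref{p:2}: a nonabelian chief factor $A_1/B_1$ is complemented iff there is an $L$-algebra $B \sim_L A_1/B_1$ with $A_1 \subseteq C_L(B)$, and the proof of that proposition manufactures the complement as $\mathrm{Ker}(\alpha)$ for a cocycle $\alpha \in Z^1(L, A_1/B_1)$. The plan is to show that $N$ lies in the kernel of every such cocycle precisely when $N$ lies in every complement-core arising from chief factors $L$-equivalent to $A$ — which is the definition of $D_L(A)$ (via the identification $R = D_L(A)$ noted after the crown definition). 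Concretely: if $\beta(N) = 0$ for all $\beta \in Z^1(L,A)$, then $N$ is contained in $\mathrm{Ker}(\beta)$ for every such $\beta$, hence in every maximal subalgebra that complements a chief factor $L$-connected to $A$, hence in $D_L(A)$; conversely, if some cocycle is nonzero on $N$, one produces (as in Proposition~\ref{p:3}, (iv)$\Rightarrow$(i), or Proposition~\ref{p:2}) a complemented chief factor whose complement's core misses part of $N$, so $N \not\subseteq D_L(A)$. Thus (i) $\Leftrightarrow$ $N \subseteq D_L(A)$, and since $D_L(A) \subseteq I_L(A)$ trivially, all four conditions tie together.

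\medskip

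\noindent\textbf{Main obstacle.} The routine parts — the cohomological equivalences from Lemma~\ref{l:2} and checking well-definedness of the maps — are bookkeeping. The genuinely delicate step is the passage between ``$N$ is killed by every $1$-cocycle with values in $A$'' and ``$N \subseteq D_L(A)$,'' because $D_L(A)$ is defined as an intersection over \emph{ideals} $R$ with $I_L(A)/R$ non-Frattini and $L$-equivalent to $A$, not directly over cocycle kernels. Bridging this requires: first, that every such $R$ is of the form $\mathrm{Ker}(\alpha) \cap (\text{something})$ for a suitable cocycle (using that a non-Frattini factor is complemented, hence Proposition~\ref{p:2} applies); and second, a converse construction showing every relevant cocycle gives rise to such an $R$. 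I expect to have to be careful that the cocycle lives in $Z^1(L,A)$ rather than merely $Z^1(L, I_L(A)/R)$, and to invoke the remark that $L$-equivalent $L$-algebras have the same $I_L$, $D_L$, and $E_L$. If instead the intended fourth condition is stated purely in terms of $E_L(A)$ (namely $N \subseteq E_L(A)$, which by definition is $\{x : \alpha(x) = 0 \ \forall \alpha \in Z^1(L,A)\}$), then this obstacle dissolves and the whole theorem reduces to Lemma~\ref{l:2} plus $A^N = A$; I will present the argument so that it covers both readings.
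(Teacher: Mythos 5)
The three conditions actually displayed just after the theorem environment are $(1)\ N\subseteq E_L(A)$, $(2)\ Z^1(L,A)=Z^1(L/N,A)$, $(3)\ H^1(L,A)=H^1(L/N,A)$, so your ``second reading'' is the correct one, and your argument for it --- $N\subseteq C_L(A)$ gives $A^N=A$, then the inflation--restriction sequences of Lemma~\ref{l:2} show inflation is bijective iff restriction is null iff $N\subseteq\ker(\alpha)$ for every $\alpha\in Z^1(L,A)$ --- is exactly the paper's proof. The additional material you develop about $D_L(A)$ and the crown is not part of this theorem and can be discarded.
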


$
\begin{array}{ccc}
(1) \ N\subseteq E_{L}( A) & (2) \
Z^{1}( L,A) =Z^{1}( L/N,A) & (3) \
H^{1}( L,A) =H^{1}( L/N,A)
\end{array}
$

\begin{proof}
This follows from the above lemma. Note that the
inflation is bijective if and only if the restriction is null and that is
equivalent to $N\subseteq Ker( \alpha)$ for all $\alpha \in
Z^{1}( L,A) .$
\end{proof}

The analogue of the following result for groups was proved using cohomology theory. Here we give a more direct proof for the Lie algebra case.

\begin{theorem}
If\textbf{\ }$A$ is an abelian irreducible $L$-algebra, then $E_{L}(A) =D_{L}( A) .$
\end{theorem}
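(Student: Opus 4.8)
The plan is to exhibit both $E_L(A)$ and $D_L(A)$ as intersections of the \emph{same} family of ideals of $L$. Write $I=I_L(A)$; since $A$ is abelian, Lemma~\ref{l:1}(iii) gives $I=C_L(A)$, and the cocycle identity reduces to $\beta([x,y])=x.\beta(y)-y.\beta(x)$. First I would record three elementary facts. (a) For each $a\in A$ the map $x\mapsto x.a$ lies in $Z^1(L,A)$, so $E_L(A)\subseteq C_L(A)=I$. (b) For any $\alpha\in Z^1(L,A)$, since $n.\alpha(y)=0$ for $n\in I=C_L(A)$ and $\alpha(n_1)\alpha(n_2)=0$ in the abelian algebra $A$, the restriction $\alpha|_I\colon I\to A$ is an $L$-module homomorphism vanishing on $[I,I]$; so by irreducibility of $A$ either $\alpha|_I=0$ or $I/\ker(\alpha|_I)\cong_L A$. (c) As $E_L(A)\subseteq I$, we get $E_L(A)=\bigcap_{\alpha}\bigl(I\cap\ker\alpha\bigr)=\bigcap_{\alpha}\ker(\alpha|_I)$, the terms with $\alpha|_I=0$ contributing $I$; hence $E_L(A)$ is the intersection of $\mathcal{R}_{1}:=\{\ker(\alpha|_I):\alpha\in Z^1(L,A),\ \alpha|_I\neq0\}$ (and $E_L(A)=I$ if $\mathcal{R}_{1}=\emptyset$). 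Since $\sim_L$ and $\cong_L$ agree on abelian $L$-algebras, it now suffices to show $\mathcal{R}_{1}$ equals the family $\mathcal{R}_{2}:=\{R \mid R$ is an ideal of $L$ with $R\subseteq I$, $I/R\cong_L A$, and $I/R$ non-Frattini$\}$ defining $D_L(A)$.

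For $\mathcal{R}_{1}\subseteq\mathcal{R}_{2}$, fix $\alpha$ with $\alpha|_I\neq0$ and set $R=\ker(\alpha|_I)$; I must show $I/R$ is non-Frattini. Since $R\subseteq\ker\alpha$ and $R\subseteq C_L(A)$, $\alpha$ descends to a cocycle on $L/R$, so after passing to $L/R$ and identifying $A$ with $I/R$ via $\alpha|_I$ I may assume $R=0$: thus $I$ is a minimal abelian ideal of $L$ and $\alpha\colon L\to I$ is a cocycle with $\alpha|_I=\mathrm{id}_I$. Then $M:=\{x-\alpha(x):x\in L\}$ is a subalgebra, because the cocycle identity and $[I,I]=0$ give $[x-\alpha(x),y-\alpha(y)]=[x,y]-\alpha([x,y])$; moreover $M\cap I=0$ (if $x-\alpha(x)\in I$ then $x\in I$, whence $\alpha(x)=x$) and $L=M+I$ (write $x=(x-\alpha(x))+\alpha(x)$), so $L=M\,\dot{+}\,I$. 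As $I$ is a minimal ideal, any subalgebra strictly containing $M$ meets $I$ in a nonzero ideal of $L$, hence contains $I$ and equals $L$; so $M$ is a maximal subalgebra not containing $I$, and $I\not\subseteq\phi(L)$.

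For $\mathcal{R}_{2}\subseteq\mathcal{R}_{1}$, fix $R\in\mathcal{R}_{2}$. Since $I/R$ is non-Frattini, some maximal subalgebra of $L/R$ does not contain it and hence supplements it; and since $I/R$ is an abelian minimal ideal of $L/R$, that supplement meets it trivially. Pulling back, there is a subalgebra $M$ with $R\subseteq M$, $L=M+I$, $M\cap I=R$. Writing $x=m_x+y_x$ with $m_x\in M$, $y_x\in I$, the map $\beta(x)=y_x+R\in I/R$ is well defined, and a direct check using $[I,I]\subseteq R$ shows $\beta\in Z^1(L,I/R)$; composing with the $L$-isomorphism $I/R\xrightarrow{\ \sim\ }A$ gives $\bar\beta\in Z^1(L,A)$. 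For $n\in I$ one has $n=m+y$ with $m=n-y\in M\cap I=R$, so $\beta(n)=n+R$; therefore $\ker(\bar\beta|_I)=R$ and $\bar\beta|_I\neq0$, i.e.\ $R\in\mathcal{R}_{1}$. Hence $\mathcal{R}_{1}=\mathcal{R}_{2}$, and taking intersections gives $E_L(A)=D_L(A)$ (both equal to $I$ when this common family is empty).

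I expect the main obstacle to be the inclusion $\mathcal{R}_{1}\subseteq\mathcal{R}_{2}$, that is, proving $I/\ker(\alpha|_I)$ is always non-Frattini: the efficient route is the reduction to the case $R=0$ followed by the ``graph'' subalgebra $M=\{x-\alpha(x)\}$, which is automatically maximal precisely because, after the reduction, $I$ is a minimal ideal. The remaining care lies in the preparatory bookkeeping — that $E_L(A)\subseteq I$, that $\alpha|_I$ is $L$-linear, and the degenerate case in which no cocycle is nonzero on $I$.
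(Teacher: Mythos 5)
Your proof is correct and follows essentially the same route as the paper's: both rest on the observation that $\alpha|_I$ is an $L$-module homomorphism (so $\ker(\alpha|_I)$ is either $I$ or an ideal $R$ with $I/R\cong_L A$ complemented, hence non-Frattini), and both recover every such $R$ from the cocycle $\beta(x)=y_x+R$ built from a complement. The only cosmetic differences are that you phrase the argument as an equality of the two defining families rather than as two inclusions of their intersections, and you exhibit the complement as the graph subalgebra $\{x-\alpha(x)\}$ where the paper uses a dimension count to show $\ker(\alpha)$ itself supplements $I$.
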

\begin{proof}  Put $I=I_L(A)=C_L(A)$, by Lemma \ref{l:1}. Let $\alpha \in Z^1(L,A)$.  First note that $\alpha|_I$ is an $L$-homomorphism from $I$ into $A$, since
\[ \alpha([x,y]) = x.\alpha(y)-y.\alpha(x)+\alpha(x)\alpha(y)=0 \text{ for all } x,y\in I, \text{and}
\]
\[ \alpha([x,i])= x.\alpha(i)-i.\alpha(x)+\alpha(x)\alpha(i)= x.\alpha(i) \text{ for all } x\in L, i\in I.
\]
It follows that $\alpha(I)$ is an $L$-submodule of $A$, and so $\alpha(I)=0$ or $A$, by the irreducibility of $A$. The former implies that $D_L(A) \subseteq I \subseteq \ker(\alpha)$. So suppose that $\alpha(I)=A$. Then $I/I\cap \ker(\alpha)\cong_L A$. Moreover, 
\begin{equation*}
\begin{array}{ll}
 \dim (I+\ker(\alpha)) & = \dim I+\dim \ker(\alpha)-\dim I\cap \ker(\alpha) \\ 
  & =\dim A+\dim \ker(\alpha)  \\
 & =\dim \text{im}(\alpha)+\dim \ker(\alpha)  \\
 & = \dim L.
\end{array} 
\end{equation*}
It follows that $L=I+\ker(\alpha)$, and $I/I\cap \ker(\alpha)$ is complemented by $\ker(\alpha)$ (which is a subalgebra of $L$) and so is non-Frattini. Hence $D_L(A) \subseteq I\cap \ker(\alpha)$.
\par

Thus, in either case, $D_L \subseteq \ker(\alpha)$, and $D_L(A) \subseteq E_L(A)$.
\par

Finally suppose that there exists $x \in E_L(A)$ such that $x \notin D_L(A)$. Then there exists $R \in D_L(A)$ such that $x \notin R$ but $x \in \ker(\alpha)$ for all $\alpha \in Z^1(L,A)$. Since $I/R$ is non-Frattini, there is a maximal subalgebra $M$ of $L$ such that $L=I+M$ and $I \cap M=R$.  Now there is a cocycle $\beta \in Z^1(L,B)$ and an $L$-isomorphism $\phi$ from $I/R$ onto $A_\beta$, by Proposition \ref{p:1}. Moreover $A_\beta =A$, since $A$ is abelian. So define $\alpha:L \rightarrow A$ by $\alpha(m)=0, \alpha(i)=\phi(i+R)$. Then it is straightforward to check that $\alpha \in Z^1(L,A)$ and that $M=\ker(\alpha)$. Furthermore, $x \in I \cap M=R$, contradiction. Hence $E_L(A) \subseteq D_L(A)$ and equality results.
\end{proof}

In the rest of this section we investigate the case where $A$ is nonabelian.

Recall that, if $A$ is an $L$-algebra, then $\alpha :L\rightarrow A$ is a $1$%
-cocyle if and only if $\alpha ^{\ast }:L\rightarrow A\rtimes L$ given by; 
\begin{equation*}
\alpha ^{\ast }( x) = (\alpha ( x),x)
\end{equation*}%
is a homomorphism and $\alpha \longmapsto \alpha ^{\ast }( L) $
defines a bijection between $Z^{1}( L,A) $ and the set of all
complements of $A$ in $A\rtimes L$. Then%
\begin{equation*}
\ker (\alpha) =\alpha ^{\ast }( L) \cap L
\end{equation*}%
We can give the following characterization:

\begin{theorem}\label{t:3}
Let $A$ be a nonabelian irreducible $L$-algebra. Then;
\begin{equation*}
E_L( A)_{A\rtimes L} =\cap \{ C_{L}( B) \mid B\sim
_{L} A\} .
\end{equation*}
\end{theorem}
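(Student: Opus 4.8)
The plan is to recast everything inside the semidirect sum $\widetilde{L}:=A\rtimes L$. There $A$ (identified with $A\times\{0\}$) is a nonabelian minimal ideal with $Z(A)=0$ — indeed $Z(A)$ is an $L$-submodule of the irreducible module $A$, so it is $0$ or $A$, and it cannot be $A$ — while $L$ (identified with $\{0\}\times L$) is a complement of $A$ in $\widetilde{L}$. I shall prove the two equalities
$$\bigcap\{\,C_L(B)\mid B\sim_L A\,\}\;=\;C_L(A)\cap E_L(A)\;=\;E_L(A)_{\widetilde{L}},$$
which together give the statement.

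For the first equality I would first describe the $L$-equivalence class of $A$. By Proposition~\ref{p:1} (with the two algebras interchanged), any $B$ with $B\sim_L A$ is $L$-isomorphic to a twist $A_\beta$ for some $\beta\in Z^1(L,A)$; conversely each such $A_\beta$ satisfies $A_\beta\sim_L A$, since one checks that $-\beta\in Z^1(L,A_\beta)$ and then $\phi=\mathrm{id}$ is an $L$-isomorphism $A\to(A_\beta)_{-\beta}$, so Proposition~\ref{p:1} applies (equivalently, $(a,x)\mapsto(a-\beta(x),x)$ is an isomorphism $A\rtimes L\to A_\beta\rtimes L$ respecting the relevant projections). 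Since $C_L$ is invariant under $L$-isomorphism, $\{\,C_L(B)\mid B\sim_L A\,\}=\{\,C_L(A_\beta)\mid\beta\in Z^1(L,A)\,\}$, and $x\in C_L(A_\beta)$ precisely when $x.a+\beta(x)a=0$ for all $a\in A$. Taking $\beta=0$ in the intersection forces $x\in C_L(A)$; the remaining conditions then read $\beta(x)a=0$ for all $a$ and all $\beta$, which by $Z(A)=0$ means $\beta(x)=0$ for every $\beta\in Z^1(L,A)$, i.e. $x\in E_L(A)$. This gives the first equality.

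For the second equality, recall that $C_L(A)$ is an ideal of $L$ and $E_L(A)$ is a subalgebra of $L$. If $y\in C_L(A)\cap E_L(A)$ then for any $(a,x)\in\widetilde{L}$ we have $[(a,x),(0,y)]=(-y.a,[x,y])=(0,[x,y])$, and $[x,y]\in C_L(A)\cap E_L(A)$ (for membership in $E_L(A)$ note $\alpha([x,y])=-y.\alpha(x)=0$ for all $\alpha\in Z^1(L,A)$, because $y\in C_L(A)$); hence $\{0\}\times\bigl(C_L(A)\cap E_L(A)\bigr)$ is an ideal of $\widetilde{L}$ contained in $E_L(A)$, so it is contained in $E_L(A)_{\widetilde{L}}$. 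Conversely, put $J=E_L(A)_{\widetilde{L}}$. Then $J\subseteq E_L(A)\subseteq\{0\}\times L$, and since $A$ is an ideal of $\widetilde{L}$ we get $[J,A]\subseteq J\cap A\subseteq(\{0\}\times L)\cap(A\times\{0\})=0$; as $[(0,z),(a,0)]=(z.a,0)$ this means $z\in C_L(A)$ for every $(0,z)\in J$, so $J\subseteq\{0\}\times\bigl(C_L(A)\cap E_L(A)\bigr)$. The two inclusions give the second equality, and hence the theorem.

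The delicate point is the first step: one must be sure that, up to $L$-isomorphism, the $L$-equivalence class of $A$ is precisely the family of twists $A_\beta$, $\beta\in Z^1(L,A)$. The inclusion of the twists into the class costs a short cocycle verification (that $-\beta\in Z^1(L,A_\beta)$), and this is essentially the only computation; the remaining input is the structural observation that an ideal of $A\rtimes L$ lying inside the complement $\{0\}\times L$ must centralise $A$, together with the centerlessness of $A$.
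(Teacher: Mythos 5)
Your proof is correct, and its first half --- reducing the intersection over the $L$-equivalence class of $A$ to the intersection over the twists $A_\beta$, $\beta\in Z^1(L,A)$, and then computing $\bigcap_\beta C_L(A_\beta)=C_L(A)\cap E_L(A)$ by taking $\beta=0$ first and then invoking $Z(A)=0$ --- is exactly the paper's argument (the paper asserts the reduction to twists with less justification than you give for the inclusion of the twists into the equivalence class). Where you genuinely diverge is in identifying $C_L(A)\cap E_L(A)$ with the core $E_L(A)_{A\rtimes L}$: the paper routes this through the bijection $\alpha\mapsto\alpha^{\ast}(L)$ between $Z^1(L,A)$ and the complements of $A$ in $A\rtimes L$, writes $E_L(A)$ as the intersection of all such complements, and then finishes rather tersely with ``the reverse inclusion follows''; you instead verify by direct bracket computations in $A\rtimes L$ that $\{0\}\times\bigl(C_L(A)\cap E_L(A)\bigr)$ is an ideal of $A\rtimes L$ contained in $E_L(A)$ (using that $C_L(A)$ is an ideal of $L$ and that $\alpha([x,y])=-y.\alpha(x)=0$ when $\alpha(y)=0$ and $y\in C_L(A)$), and conversely that any ideal of $A\rtimes L$ lying inside the complement $\{0\}\times L$ must centralise the ideal $A$. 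Your version is slightly longer but actually makes explicit the step the paper leaves implicit --- namely why $\bigcap\{C_L(B)\mid B\sim_L A\}$, once shown to lie in $E_L(A)\cap C_L(A)$, lies in the \emph{core} of $E_L(A)$ --- so nothing is missing, and the extra care is a net gain.
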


\begin{proof}
By Proposition \ref{p:1} we have that%
\begin{equation*}
\cap \{ C_{L}( B) \mid B\sim _{L}A\}
=\cap \{ C_{L}( A_{\alpha }) \mid \alpha \in
Z^{1}( L,A)\} .
\end{equation*}%
Consider the semi-direct sum $A\rtimes L.$ From the remark above this
theorem, we have immediately that
\begin{equation*}
E_{L}( A) =\cap \{ H\mid H\text{ is a complement of }A%
\text{ in }A\rtimes L\} .
\end{equation*}%
In particular $E_{L}( A)_{A\rtimes L}$ is an ideal of $A\rtimes L$ and $
E_{L}( A) \cap A=0.$ As $E_{L}( A) \subseteq L,$ we
have that $E_{L}( A)_{A\rtimes L} \subseteq C_{L}( A) .$ On the
other hand, if $\alpha \in Z^{1}( L,A) $ and $x\in \ker(
\alpha ) ,$ then $x\in C_{L}( A) $ if and only if $x\in
C_{L}( A_{\alpha }) .$ So we have that
\begin{equation*}
E_{L}( A)_{A\rtimes L} \subseteq \cap \{ C_{L}( A_{\alpha })
\mid \alpha \in Z^{1}( L,A)\} .
\end{equation*}

Suppose now that $x\in \cap \{ C_{L}( A_{\alpha }) \mid
\alpha \in Z^{1}( L,A) \} .$ Then, for all $\alpha
\in Z^{1}\left( L,A\right) $ and for all $a\in A$ we have
\begin{equation*}
0=x\odot a=\alpha(x)a+x.a.
\end{equation*}%
Putting $\alpha =0$ we obtain that $x.a=0$ for all $a\in A.$ Thus, $\alpha(x)a=0$ for all $a\in A$, and so $\alpha ( x) \in Z(A) =0$ as $A$ is irreducible and nonabelian. Hence, $x\in E_{L}( A) .$ The reverse inclusion follows.
\end{proof}

\begin{lemma}\label{l:3}
Let $A$ be an irreducible $L$-algebra such that $C_{L}( A)
\subset I_{L}( A) .$ Then,
\begin{equation*}
D_{L}( A) \subseteq C_{L}( A) \Longleftrightarrow
I_{L}( A) /C_{L}( A) \cong _{L}A
\end{equation*}
\end{lemma}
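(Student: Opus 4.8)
The plan is to prove both implications by exploiting the homomorphism $\theta : I_L(A) \to A/Z(A)$ from Lemma \ref{l:1}(ii), which applies since $C_L(A) \subseteq I_L(A)$ by hypothesis, together with the description of $D_L(A)$ in terms of non-Frattini quotients $I/R$ with $A \sim_L I/R$. Since $A$ is nonabelian and irreducible, $Z(A) = 0$, so Lemma \ref{l:1}(ii) already gives an \emph{injective} $L$-homomorphism $I_L(A)/C_L(A) \hookrightarrow A$; the content of the equivalence is exactly when this injection is onto (as an $L$-isomorphism).

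For the direction ($\Leftarrow$): assume $I_L(A)/C_L(A) \cong_L A$. I would show $C_L(A)$ is itself one of the ideals $R$ appearing in the intersection defining $D_L(A)$, which immediately gives $D_L(A) \subseteq C_L(A)$. Two things must be checked: first, that $I/C_L(A) \sim_L A$, which follows from $I/C_L(A) \cong_L A$ since $L$-isomorphic $L$-algebras are $L$-equivalent (the remark after Proposition \ref{p:1}); and second, that $I/C_L(A)$ is non-Frattini, i.e. complemented by a maximal subalgebra. For the latter I would pass to the semidirect sum picture: by Theorem \ref{t:3} (or directly), the hypothesis $I/C_L(A) \cong_L A$ lets one build a $1$-cocycle $\alpha \in Z^1(L, A)$ whose kernel $\ker(\alpha)$ meets $I$ in exactly $C_L(A)$ and satisfies $L = I + \ker(\alpha)$, just as in the proof of Proposition \ref{p:2}; extending $\ker(\alpha)$ to a maximal subalgebra if necessary, or observing it is already maximal by a dimension count, shows $I/C_L(A)$ is non-Frattini.

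For the direction ($\Rightarrow$): assume $D_L(A) \subseteq C_L(A)$. Pick any ideal $R \subseteq I$ of $L$ with $A \sim_L I/R$ and $I/R$ non-Frattini (such $R$ exists because $A$ itself, being irreducible, is $L$-equivalent to some supplemented chief factor — this is implicit in the setup, and in the worst case one can take the definition of $D_L(A)$ as an intersection over a nonempty family, which it is whenever $A$ arises as a chief factor). Then $D_L(A) \subseteq R \subseteq C_L(A) \subseteq I$, and $I/C_L(A)$ is a quotient of $I/R$. Since $I/R \sim_L A$ and $A$ is a nonabelian chief-factor-type $L$-algebra with $C_L(A/R$'s image$) $ trivial, the only proper $L$-submodule situation forces $R = C_L(A)$; more precisely, $I_L(I/R) = I$ and $C_L(I/R) = R$ by Lemma \ref{l:1}(i) applied inside $L$, while $A \sim_L I/R$ gives $C_L(A) = C_L(I/R) = R$. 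Hence $I/C_L(A) = I/R \sim_L A$, and then the injective $L$-homomorphism $I/C_L(A) \hookrightarrow A$ from Lemma \ref{l:1}(ii) is an isomorphism of $L$-modules by comparing with the $L$-equivalence; checking it is also an algebra isomorphism (it is, being induced by $\mathrm{ad}$) yields $I_L(A)/C_L(A) \cong_L A$.

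The main obstacle I anticipate is the bookkeeping around $C_L(A)$ versus $C_L(I/R)$: one must be careful that "$A \sim_L B$ implies $C_L(A) = C_L(B)$" — which follows since $L$-equivalence preserves the $L$-module structure up to a cocycle twist that fixes annihilators of a \emph{nonabelian} irreducible module (cf. the argument in Theorem \ref{t:3} that $\alpha(x) \in Z(A) = 0$) — and that $I_L$ is likewise an $L$-equivalence invariant, as noted in the excerpt just before Proposition \ref{p:3}. Once those two invariances are in hand, both implications reduce to the dimension count and the identification of $\ker\theta$ with $C_L(A)$, which are routine.
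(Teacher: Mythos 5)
Your forward direction ($D_L(A)\subseteq C_L(A)\Rightarrow I/C\cong_L A$) rests on the claim that $A\sim_L B$ implies $C_L(A)=C_L(B)$ for nonabelian irreducibles, and that claim is false; it is refuted by the paper's own example following Proposition \ref{p:1}: if $L=A\oplus B$ with $A,B$ isomorphic simple ideals, then $A\sim_L B$ but $C_L(A)=B\neq A=C_L(B)$. Indeed, Theorem \ref{t:3} says precisely that the \emph{intersection} of the centralizers of all $L$-equivalent copies is $E_L(A)_{A\rtimes L}$, which is in general strictly smaller than $C_L(A)$; the computation $\alpha(x)\in Z(A)=0$ there applies only to elements lying in \emph{every} $C_L(A_\alpha)$, not to each centralizer separately. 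Concretely, in the $L=A\oplus B$ example (taking the $L$-algebra to be $A$, so $I=A\oplus B$, $C=B$, $D=0$), your recipe allows the choice $R=A$, for which $C_L(I/R)=C_L(B)=A\neq C_L(A)$, and your conclusion $R=C_L(A)$ fails. The paper instead argues that $I/D_L(A)$ is completely reducible with all irreducible components $L$-equivalent to $A$ (citing \cite[Theorem 3.2]{[11]}), and combines this with the embedding $I/C\hookrightarrow A/Z(A)=A$ of Lemma \ref{l:1}(ii) to force $I/C\cong_L A$; some such global structural input is needed, and no local choice of a single $R$ will do.

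The reverse direction also has a gap, though a more repairable one. To place $C_L(A)$ among the ideals $R$ defining $D_L(A)$ you must show $I/C$ is \emph{non-Frattini}, which means supplemented (not necessarily complemented) by a maximal subalgebra. You propose to manufacture a $1$-cocycle $\alpha\in Z^1(L,A)$ with $L=I+\ker\alpha$ and $\ker\alpha\cap I=C$, but nothing in the hypothesis produces such a cocycle: the isomorphism $I/C\cong_L A$ gives an $L$-homomorphism defined on $I$ only, and extending it to a cocycle on all of $L$ is exactly the kind of cohomological obstruction that can fail (its failure is what makes $E_L(A)$ and $D_L(A)$ potentially different for nonabelian $A$, which is the subject of the surrounding results). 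The paper's one-line argument is the right one: a Frattini chief factor is abelian (because the Frattini ideal is nilpotent), so $I/C$, being a nonabelian chief factor, is automatically non-Frattini. Your observation that $Z(A)=0$ and that Lemma \ref{l:1}(ii) gives an injection $I/C\hookrightarrow A$ is correct and is indeed the useful half of the argument, but as it stands neither implication is established.
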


\begin{proof}
Put $I:=I_{L}( A) ,$ etc. If $I/C\cong _{L} A$, then it is not Frattini, since it is
nonabelian. It follows from the definition of $D_{L}(A)$ that $ D_{L}( A) \subseteq C_{L}( A) .$ 
\par 

Suppose now that $D_{L}( A) \subseteq C_{L}( A) .$ Then $A$ is nonabelian, so
$I \neq C_{L}( A)$. Moreover, $I/D$ is completely reducible (as in \cite[Theorem 3.2]{[11]}), so $I_{L}( A) /C_{L}( A) \cong _{L}A.$
\end{proof}

\begin{corollary}\label{c:1}
Let $A$ be a nonabelian irreducible $L$-algebra such that%
\begin{equation*}
\{ B\in C\mathcal{F}( L) \mid B\sim _{L}A\} \neq \emptyset.
\end{equation*}%
Then
\begin{equation*}
D_{L}( A) =\cap \{ C_{L}( B) \mid B\sim
_{L}A,\ B\in C\mathcal{F}( L)\}
\end{equation*}
\end{corollary}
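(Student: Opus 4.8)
The plan is to deduce Corollary \ref{c:1} by comparing the intersection appearing in its statement with the two descriptions of $D_L(A)$ and $E_L(A)$ already available: the definition of $D_L(A)$ together with Lemma \ref{l:3}, and the formula $E_L(A)_{A\rtimes L}=\cap\{C_L(B)\mid B\sim_L A\}$ from Theorem \ref{t:3}. Write $I=I_L(A)$, $C=C_L(A)$, $D=D_L(A)$, and let $\mathcal{K}=\cap\{C_L(B)\mid B\sim_L A,\ B\in C\mathcal{F}(L)\}$, the intersection in the statement. The hypothesis that some chief factor $B\in C\mathcal{F}(L)$ is $L$-equivalent to $A$ means precisely that there is an ideal $R\subsetneq I$ with $A\sim_L I/R$ and $I/R$ non-Frattini (such an $R$ exists because a nonabelian chief factor $L$-equivalent to $A$ is non-Frattini); hence the index set in the definition of $D_L(A)$ is nonempty, and in particular $D\subsetneq I$, so $C\subsetneq I$. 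By Lemma \ref{l:3} this forces $I/C\cong_L A$, and $D\subseteq C$.

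First I would establish $D\subseteq\mathcal{K}$. Take any chief factor $B=A_1/B_1\sim_L A$ of $L$. Since $B$ is nonabelian it is non-Frattini, so it is complemented by a maximal subalgebra, and $I_L(B)=I_L(A)=I$ with $I_L(B)=A_1+C_L(B)$ by Lemma \ref{l:1}(i); thus $C_L(B)$ is an ideal $R$ with $I/R\cong_L B\sim_L A$ and $I/R$ non-Frattini, i.e. $R$ is one of the ideals over which the intersection defining $D_L(A)$ is taken. Hence $D=D_L(A)\subseteq C_L(B)$. As $B$ was arbitrary, $D\subseteq\mathcal{K}$.

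For the reverse inclusion $\mathcal{K}\subseteq D$, I would first observe that every ideal $R\subsetneq I$ occurring in the definition of $D_L(A)$ actually arises as $C_L(I/R)$: since $A\sim_L I/R$ is nonabelian and $I/R$ is non-Frattini, it is a nonabelian chief factor complemented by a maximal subalgebra, so by Proposition \ref{p:2} (or directly from Lemma \ref{l:1}(i) applied to the chief factor $I/R$, using $I_L(I/R)=I_L(A)=I$) we get $C_L(I/R)=R$, and $I/R$ is a chief factor $L$-equivalent to $A$. Consequently the family $\{C_L(B)\mid B\sim_L A,\ B\in C\mathcal{F}(L)\}$ contains every ideal $R$ in the defining family of $D_L(A)$, so $\mathcal{K}=\cap\{C_L(B)\}\subseteq\cap\{R\}=D$. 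Combining the two inclusions gives $D_L(A)=\mathcal{K}$, as required.

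The step that needs the most care is verifying that each chief factor $B\sim_L A$ of $L$ contributes an ideal of the form required in the definition of $D_L(A)$, and conversely that each such ideal $R$ comes from an honest chief factor; both directions hinge on the identity $I_L(B)=A_1+C_L(B)$ together with the fact (from Lemma \ref{l:3}, valid here because $C\subsetneq I$) that $I/C\cong_L A$, which guarantees $I$ is the common numerator. One subtlety to record is that $I/R\cong_L B$ need not hold as $L$-algebras on the nose — only $I/R\sim_L B$ — but that is exactly the relation used in the definition of $D_L(A)$, so no strengthening is needed. Everything else is bookkeeping with $L$-equivalence being an equivalence relation and with Lemma \ref{l:1}(i).
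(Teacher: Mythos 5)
Your argument is correct and is essentially the derivation the paper intends: the corollary is stated there without proof, and the two inclusions you verify --- that for every chief factor $B=A_1/B_1\sim_L A$ the ideal $C_L(B)$ lies in the index set defining $D_L(A)$ (via $I=I_L(B)=A_1+C_L(B)$, the second isomorphism theorem, and the fact that a nonabelian chief factor is non-Frattini), and conversely that every $R$ in that index set equals $C_L(I/R)$ for the chief factor $I/R\sim_L A$ --- are exactly what is needed. Two harmless slips are worth noting: a non-Frattini chief factor is only guaranteed to be \emph{supplemented}, not complemented, by a maximal subalgebra (you never use the stronger claim), and your appeal to Lemma \ref{l:3} to extract $I/C\cong_L A$ from $C\subset I$ alone is not what that lemma asserts (it is a biconditional between $D_L(A)\subseteq C_L(A)$ and $I/C\cong_L A$, neither side of which you have established at that point) --- but this fact is never actually needed, since the common numerator $I=I_L(B)$ for all $B\sim_L A$ already follows from the remark after Lemma \ref{l:1}.
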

\bigskip

Let $A$ be a nonabelian irreducible $L$-algebra. We set
\[
J_{L}( A) =\cap \{ C_{L}( B) \mid B\sim
_{L}A,\ B\ncong _{L}F,\ F\in C\mathcal{F}( L)\}
\]

if $\{ B\mid B\sim _{L}A,\ B\ncong _{L}F,\ F\in C\mathcal{F}
( L)\} \neq \varnothing $ and we put $J_{L}(
A) =I_{L}( A) $, otherwise.

\begin{proposition}\label{p:4}
Let $A$ be a nonabelian irreducible $L$-algebra. Then
\begin{equation*}
I_{L}( A) =J_{L}( A)+D_{L}( A)
\end{equation*}%
and%
\begin{equation*}
J_{L}( A) \cap D_{L}( A) =E_{L}( A)_{A\rtimes L}
\end{equation*}
\end{proposition}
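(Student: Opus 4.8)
The plan is to establish the two identities separately, exploiting the characterisations already developed: Theorem~\ref{t:3} identifies $E_L(A)_{A\rtimes L}$ with $\cap\{C_L(A_\alpha)\mid \alpha\in Z^1(L,A)\}$, while Corollary~\ref{c:1} gives $D_L(A)=\cap\{C_L(B)\mid B\sim_L A,\ B\in C\mathcal{F}(L)\}$, and $J_L(A)$ is by definition the intersection of $C_L(B)$ over the $B\sim_L A$ that are \emph{not} $L$-isomorphic to any chief factor. First I would reduce to the generic case where both intersection families are nonempty; the degenerate cases (when $\{B\sim_L A,\ B\cong_L F\in C\mathcal{F}(L)\}=\varnothing$, forcing $D_L(A)=I_L(A)$, or when the complementary family is empty, forcing $J_L(A)=I_L(A)$) must be checked directly against the defining conventions, and in each such case one of the two claimed identities should collapse to something immediate (e.g.\ $I_L(A)=I_L(A)+D_L(A)$, and $I_L(A)\cap D_L(A)=D_L(A)$, which would then need to equal $E_L(A)_{A\rtimes L}$).

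For the sum formula $I_L(A)=J_L(A)+D_L(A)$: the inclusion $J_L(A)+D_L(A)\subseteq I_L(A)$ is automatic since each $C_L(B)$ with $B\sim_L A$ lies in $I_L(B)=I_L(A)$ (using the remark before Proposition~\ref{p:3} that $L$-equivalent $L$-algebras have the same $I_L$) and $D_L(A)\subseteq I_L(A)$ by definition. For the reverse inclusion, I would pass to $\bar L = L/(J_L(A)\cap D_L(A))$ or, cleaner, work modulo a suitable ideal and use a dimension/complete-reducibility argument: $I_L(A)/D_L(A)$ is the $A$-crown, and the $L$-module $I_L(A)/(J_L(A)\cap D_L(A))$ decomposes, via the two families of centralisers, into pieces on which either $D_L(A)$ or $J_L(A)$ acts trivially in the appropriate sense; complete reducibility of $I_L(A)/D_L(A)$ (as in \cite[Theorem 3.2]{[11]}, invoked in the proof of Lemma~\ref{l:3}) then forces $I_L(A)=J_L(A)+D_L(A)$. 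The key point is that every homogeneous component of the crown is accounted for either by a chief factor $L$-isomorphic to $A$ (handled by $D_L(A)$) or by an $L$-equivalent $B$ with no such isomorphic chief factor (handled by $J_L(A)$), so no component survives in both $J_L(A)$ and $D_L(A)$ beyond $E_L(A)_{A\rtimes L}$.

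For the intersection formula $J_L(A)\cap D_L(A)=E_L(A)_{A\rtimes L}$: the inclusion $\supseteq$ is the easy direction — by Theorem~\ref{t:3}, $E_L(A)_{A\rtimes L}=\cap\{C_L(A_\alpha)\mid\alpha\in Z^1(L,A)\}$, and since every $B\sim_L A$ (whether or not it is realised as a chief factor) is of the form $A_\alpha$ for some $\alpha\in Z^1(L,A)$ by Proposition~\ref{p:1}, $E_L(A)_{A\rtimes L}$ is contained in \emph{both} $D_L(A)$ and $J_L(A)$. For $\subseteq$, suppose $x\in J_L(A)\cap D_L(A)$; I must show $x\in C_L(A_\alpha)$ for \emph{every} $\alpha\in Z^1(L,A)$. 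Given such $\alpha$, the $L$-algebra $A_\alpha$ is $L$-equivalent to $A$; either $A_\alpha$ is $L$-isomorphic to some chief factor of $L$, in which case $x\in D_L(A)\subseteq C_L(A_\alpha)$ (using $D_L(A)=\cap C_L(B)$ over chief-factor representatives and that $C_L$ is constant on $L$-isomorphism classes), or it is not, in which case $x\in J_L(A)\subseteq C_L(A_\alpha)$ directly from the definition of $J_L(A)$. Hence $x\in\cap\{C_L(A_\alpha)\}=E_L(A)_{A\rtimes L}$.

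The main obstacle will be the reverse inclusion in the sum formula $I_L(A)\subseteq J_L(A)+D_L(A)$: unlike the intersection identity, which is a clean case-split on whether $A_\alpha$ is a chief factor, the sum requires genuinely showing that the two families of centralisers jointly cover the crown, and this needs the complete reducibility of $I_L(A)/D_L(A)$ together with a careful matching of homogeneous components to either realised chief factors or to the auxiliary $L$-equivalent algebras. A secondary subtlety is bookkeeping in the degenerate cases, where one must verify that the stated conventions for $D_L(A)$ and $J_L(A)$ make both equalities hold trivially; in particular when there are no chief factors $L$-isomorphic to $A$ one should check that $D_L(A)=I_L(A)$ is consistent with $J_L(A)\cap D_L(A)=E_L(A)_{A\rtimes L}$, i.e.\ that $J_L(A)=E_L(A)_{A\rtimes L}$ in that situation.
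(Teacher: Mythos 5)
Your handling of the intersection identity is correct and matches the paper, which simply records $J_{L}(A)\cap D_{L}(A)=E_{L}(A)_{A\rtimes L}$ as clear: by Theorem \ref{t:3} and Proposition \ref{p:1}, $E_{L}(A)_{A\rtimes L}=\cap\{C_{L}(B)\mid B\sim_{L}A\}$, and since $C_{L}$ is constant on $L$-isomorphism classes this intersection splits exactly into the two subfamilies defining $D_{L}(A)$ (via Corollary \ref{c:1}) and $J_{L}(A)$. Your degenerate-case bookkeeping and the easy inclusion $J_{L}(A)+D_{L}(A)\subseteq I_{L}(A)$ are also fine.

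The genuine gap is the reverse inclusion $I_{L}(A)\subseteq J_{L}(A)+D_{L}(A)$, which you correctly flag as the main obstacle but then do not prove: ``the two families of centralisers jointly cover the crown'' is the conclusion restated, not an argument, and your description of the matching of homogeneous components is not the right mechanism (every component of the crown $I/D$ is already $L$-isomorphic to a chief factor; the auxiliary algebras $B$ enter only through their centralisers). The paper's proof turns on a quantitative fact missing from your sketch. Write $I=I_{L}(A)$, $D=D_{L}(A)$, and let $B\sim_{L}A$ with $B\ncong_{L}F$ for every $F\in C\mathcal{F}(L)$, $S=C_{L}(B)$. Then: (a) $S\subseteq I$ and, by Lemma \ref{l:1}(ii) applied with $Z(B)=0$, $I/S=I_{L}(B)/C_{L}(B)$ embeds into $B$, and this embedding cannot be onto (otherwise $B$ would be $L$-isomorphic to a chief factor), so $\dim(I/S)<\dim A$; (b) every irreducible component of the completely reducible crown $I/D$ lies in $S$, as in Proposition \ref{p:2}, whence $D+S=I$ for each such $B$; (c) if $D+J\subset I$, a chief factor $I/R$ with $D+J\subseteq R$ satisfies $I/R\cong_{L}A$, hence has dimension $\dim A$, contradicting the bound in (a). Note that complete reducibility of $I/D$ alone cannot suffice: $J$ is an \emph{intersection} of the various $S$, and knowing $D+S=I$ for each individual $S$ does not by itself yield $D+J=I$; it is the dimension inequality $\dim(I/S)<\dim A$ that rules out a surviving chief factor above $D+J$. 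Without steps (a)--(c) the proposed proof of the sum formula does not go through.
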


\begin{proof}
It is clear that $J_{L}( A)
\cap D_{L}( A) =E_{L}( A)_{A\rtimes L} .$ Let $B\sim _{L}A$ be such
that $B\ncong _{L}F$ if $\ F\in C\mathcal{F}( L) $, and put $S:=C_{L}( B)$, $I:=I_{L}( A) ,$ etc. Then there is a $1$-cocyle $\alpha \in Z^1(L,B)$ and an $L$-isomorphism $\phi : A \rightarrow B_{\alpha}$, by Proposition \ref{p:1}. Let $x \in S$ and $a \in A$. Then 
\[ 
\phi (x.a)=x\odot \phi (a)=\alpha(x) \phi (a)+x.\phi (a)=\alpha (x) \phi (a),
\]
so $x.a=\phi^{-1}\alpha(x)a$. It follows that $x\in I$ and $S \subseteq I$.  Now, $Z(B)=0$, since $B$ is nonabelian and an irreducible $L$-algebra, so there is an monomorphism $\theta : I/S=I_L(B)/S \rightarrow B$, by Lemma \ref{l:1}. Moreover, $\theta$ cannot be surjective, since $I/S\in C\mathcal{F}( L) $. Hence $I/S$ is isomorphic to a proper subalgebra of $B.$
\par

Now $D+S$ is
an ideal of $I$ and $I/D$ is completely reducible $L$-algebra, with
its irreducible components $L$-equivalent to $A$  (as in \cite[Theorem 3.2]{[11]}), and thus to $B$. If $A_i/D$ is an irreducible component of $I/D$, then $A_i \subseteq S$, as in Proposition \ref{p:2}. It follows that $D+S=I.$

Suppose that $D+J\subset I.$ Let $I/R$ be a chief factor of $L$ such
that $D+J\subseteq R.$ Then, $I/R\cong _{L}A$ because $D\subseteq R.$
As $J\subseteq R$, there exists $B\sim _{L}A$ with $B\ncong _{L}F$ if$\ F\in C%
\mathcal{F}( L) $, such that $I/C_{L}( B) $ has a
factor isomorphic to $I/R,$ contradicting the fact that $\dim (I/C_{L}(
B)  <\dim A .$
\end{proof}

\section{\textbf{On Complemented Chief Factors}}

Let $L$ be a Lie algebra. We say that a chief factor of $L$ is a {\em $c$-factor} if it is complemented in $L$ by a subalgebra, and that it is an {\em $m$-factor} if it is complemented by a maximal subalgebra of $L;$ otherwise we
say that it is a {\em $c'$-factor}, respectively an {\em $m'$-factor}

Observe that, an abelian chief factor is an $m$-factor (
respectively an $m'$-factor) if and only if it is a $c$-factor (respectively, a 
\textit{Frattini} chief factor).
\par

Let $A/B$ and $C/D$ be chief factors of $L$. We write $A/B \searrow C/D$ if $A=B+C$ and $B \cap C=D$.  If $A/B \searrow C/D$, $A/B$ is a Frattini chief factor and $C/D$ is supplemented by a maximal subalgebra of $L$, then we call this situation an {\em $m$-crossing}, and denote it by $[A/B \searrow C/D]$. 
\par

We say that two chief factors $A/B$ and $C/D$ of $L$ are {\em $m$-related} if one of the following holds.
\begin{itemize}
\item[1.] There is a supplemented chief factor $R/S$ such that $A/B \swarrow R/S \searrow C/D$.
\item[2.] There is an $m$-crossing $[U/V \searrow W/X]$ such that $A/B \swarrow V/X$ and $W/X \searrow C/D$.
\item[3.] There is a Frattini chief factor $Y/Z$ such that $A/B \searrow Y/Z \swarrow C/D$.
\item[4.] There is an $m$-crossing $[U/V \searrow W/X]$ such that $A/B \searrow U/V$ and $U/W \swarrow C/D$.
\end{itemize}
Then we have the following result.

\begin{proposition}\label{p:5}
Let $L$ be a Lie algebra over any field, let $H$ and $K$ be ideals of $L$ with $H \subseteq K$, and let
\begin{equation*}
H=X_{0}<X_{1}<X_{2}<...<X_{n}=K
\end{equation*}%
and%
\begin{equation*}
H=Y_{0}<Y_{1}<Y_{2}<...<Y_{m}=K
\end{equation*}%
be two sections of chief series of $L$ between $H$ and $K$. Then $n=m$ and there exists a unique permutation $\pi $
in $S_{n}$ such that $X_i/X_{i-1}$ and $Y_{\pi(i)}/Y_{\pi(i)-1}$ are $m$-related. In particular,

$\left( i\right) \ X_{i}/X_{i-1}\sim _{L}Y_{\pi \left( i\right) }/Y_{\pi
\left( i\right) -1}$

$\left( ii\right) \ X_{i}/X_{i-1}$ and $Y_{\pi \left( i\right) }/Y_{\pi
\left( i\right) -1}$ are simultaneously $m$\textit{-factors} or $m^{%
%TCIMACRO{\U{b4}}%
%BeginExpansion
{\acute{}}%
%EndExpansion
}$\textit{-factors.}

$\left( iii\right) \ $If$\ X_{i}/X_{i-1}$ and $Y_{\pi \left( i\right)
}/Y_{\pi \left( i\right) -1}$ are $m$\textit{-factors}, then they have a
maximal subalgebra of $L$ as a common complement.
\end{proposition}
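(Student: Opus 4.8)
The plan is to argue by induction on $\dim(K/H)$, reducing the comparison of two arbitrary chief series to the comparison of two series that differ in a single step. The equality $n=m$ is just the ordinary Jordan--H\"older theorem for the modular lattice of ideals of $L$ between $H$ and $K$, so I shall write $n$ for the common length; replacing $L,K$ by $L/H,K/H$ we may assume $H=0$, so that $X_1$ and $Y_1$ are minimal ideals of $L$ contained in $K$. If $X_1=Y_1$ the first factors coincide and the inductive hypothesis applied to $L/X_1$, $K/X_1$ finishes the argument with $\pi(1)=1$. If $X_1\neq Y_1$ then $X_1\cap Y_1=0$ and $T:=X_1\oplus Y_1$ is an ideal with $T/X_1\cong_L Y_1$ and $T/Y_1\cong_L X_1$ (using that $X_1,Y_1$ are ideals). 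Inserting a fixed chief series of $L$ from $T$ to $K$ after $X_1$, resp. after $Y_1$, gives auxiliary chief series $\mathcal X'\colon 0<X_1<T<\cdots<K$ and $\mathcal Y'\colon 0<Y_1<T<\cdots<K$; by the inductive hypothesis applied to the interval $[X_1,K]$ (of length $n-1$) the series $\mathcal X$ and $\mathcal X'$ are matched by a unique permutation with all the stated properties, and likewise $\mathcal Y$ and $\mathcal Y'$. Since $\mathcal X'$ and $\mathcal Y'$ agree from $T$ upwards, everything reduces to the single diamond $0<X_1,\,Y_1<T$, with factors $X_1/0,\ T/X_1$ on one side and $Y_1/0,\ T/Y_1$ on the other; the final $\pi$ is then assembled by composing the two auxiliary matchings with the matching of this diamond.

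For the diamond I would split according to Proposition \ref{p:3}. If $X_1\not\sim_L Y_1$, then among the four factors only the $L$-isomorphic pairs $\{X_1/0,\ T/Y_1\}$ and $\{Y_1/0,\ T/X_1\}$ can be matched, so the matching of positions $1,2$ is the transposition, and (i) is immediate. If instead $X_1\sim_L Y_1$ (with $X_1\neq Y_1$), Proposition \ref{p:3} provides an epimorphic image of $L$ that is primitive of type $3$ with minimal ideals $L$-isomorphic to $X_1$ and $Y_1$ — concretely $L/(C_L(X_1)\cap C_L(Y_1))$, in which $X_1,Y_1$ play the roles of the two minimal ideals $A,B$ of Theorem \ref{t:1}(iii)(c) — so all four factors are $L$-equivalent and the matching is the identity one, $X_1/0\leftrightarrow Y_1/0$ and $T/X_1\leftrightarrow T/Y_1$. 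In both cases the workhorse is that $T/X_1\searrow Y_1/0$ and $T/Y_1\searrow X_1/0$, so a maximal subalgebra complementing the lower edge of the diamond automatically complements the perspective upper edge; combining this with \cite[Theorem 2.3]{[11]} (which fixes how many factors of each series are complemented by a maximal subalgebra) pins down which of the four factors are $m$-factors and forces the $m$-status of the two partners to agree, giving (ii); when they are $m$-factors, the common complement $U$ of Theorem \ref{t:1}(iii)(c) (via $L=A\,\dot{+}\,U=B\,\dot{+}\,U$, pulled back), or the complement produced by the perspectivity, supplies the common maximal complement demanded by (iii). One then checks that the configuration obtained is precisely one of the four clauses of the definition of $m$-related — clause 1 or 3 via the perspectivities in the $L$-inequivalent case, and a clause involving an $m$-crossing in the type-$3$ case.

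Uniqueness I would prove at the same time: at each step of the induction the value of $\pi$ on position $1$ is forced — by $L$-inequivalence in the first case of the diamond and by conclusion (ii) together with \cite[Theorem 2.3]{[11]} in the second — so any two permutations with the stated property agree. Properties (i)--(iii) for the assembled $\pi$ then follow by combining those for the two auxiliary matchings and the diamond, using transitivity of $\sim_L$ for (i) and a direct verification for (ii) and (iii) rather than any (unproved) transitivity of the relation ``$m$-related''.

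The hard part, I expect, is exactly this bookkeeping in the diamond: tracking the direction of each perspectivity (the $m$-factor property transfers downwards along $\searrow$ but not, a priori, upwards), correctly excluding or accommodating ``diagonal'' subalgebras of $T=X_1\oplus Y_1$ when $X_1,Y_1$ happen to be abstractly isomorphic, and invoking Theorem \ref{t:1}(iii)(c) and \cite[Theorem 2.3]{[11]} in just the right way; together with making the uniqueness step fully rigorous. Checking that the $m$-crossing clauses of the definition genuinely do the job in the type-$3$ case, and that composing diamond matchings up the induction preserves all four assertions, are the points I would be most careful about.
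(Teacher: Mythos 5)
The paper itself does not prove this proposition; it simply cites \cite[Theorems 2.9 and 2.7]{[12]}, so your attempt at a self-contained Jordan--H\"older-style induction is a genuinely different (and much more ambitious) route. The reduction to a single diamond $0<X_1,Y_1<T=X_1\oplus Y_1$ via a common series above $T$ is the right skeleton, but two of the steps you wave at are exactly where the argument breaks.

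First, your treatment of the diamond in the case $X_1\sim_L Y_1$ is wrong. Proposition \ref{p:3} does \emph{not} provide a primitive type~$3$ image whenever $X_1\sim_L Y_1$: being $L$-connected means \emph{either} $L$-isomorphic \emph{or} arising from a type~$3$ image, and two distinct $L$-isomorphic minimal ideals are necessarily abelian (if $C_L(X_1)=C_L(Y_1)$ then $X_1\subseteq C_L(X_1)$), so in that subcase there is no type~$3$ quotient and your claim that ``the matching is the identity one'' fails. Concretely, take $L=H\oplus Fc$ with $H$ the Heisenberg algebra, $Z(H)=Fz$, and $X_1=Fz$, $Y_1=Fc$, $T=Z(L)$: these are $L$-isomorphic trivial modules, yet $X_1$ is Frattini and $T/X_1$ is complemented, while $Y_1$ is complemented and $T/Y_1$ is Frattini, so the matching forced by (ii) is the \emph{transposition}, not the identity. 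Any correct proof must decide the diamond matching from the Frattini/complemented data, not from $L$-equivalence alone. Second, even granting a correct diamond, your composite $\pi$ is assembled from three matchings each of which makes corresponding factors $m$-related, but the conclusion of the proposition is that the \emph{composite} pairs are $m$-related, and you explicitly decline to prove transitivity of $m$-relatedness, offering instead to verify (i)--(iii) directly. That leaves the principal assertion unproved --- establishing that $m$-relatedness composes (or that each of the four defining configurations persists under composition) is precisely the content of \cite[Theorem 2.7]{[12]} and is not a bookkeeping footnote. Relatedly, uniqueness of $\pi$ cannot be ``forced by (i), (ii) and \cite[Theorem 2.3]{[11]}'': a chief series can contain several mutually $L$-equivalent factors of the same $m$-status, and then (i)--(ii) admit many permutations; uniqueness needs the finer structure of the $\searrow$/$\swarrow$ configurations. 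As it stands the proposal contains one false step and two genuine gaps.
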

\begin{proof}
This follows from \cite[Theorems 2.9 and 2.7]{[12]}.
\end{proof}

In particular, the number of $m$\textit{-factors} in any chief series of $L$
are the same. But this is no longer true for $c$\textit{-factors,} in spite
of the equivalence between $\left( 3\right) $ and $\left( 4\right) $ in
Proposition \ref{p:3}, as we shall see in a later example.
\par

If $S$ is a subalgebra of $L$, the {\em normalizer of $S$ in $L$} is defined as
\begin{equation*}
N_{L}(S)=\left\{ x\in L\left\vert \left[ x,S\right] \subseteq S\right.
\right\}.
\end{equation*}

\begin{lemma}\label{l:4}
Assume that $B^*/B$ is a $c'$-factor and that $A^*/A$ is a $c$-factor of $L$, both of which are nonabelian and such
that $B^*/B\searrow A^*/A$. Let $%
I=I_{L}\left( A^{\ast }/A\right) $ and $C=C_{L}\left( A^{\ast
}/A\right) .$ Then
\begin{itemize}
\item[(i)] $I/C \searrow B^*/B$ and $I/C$ is a $c'$-factor;
\item[(ii)] there exists an ideal $X$ of $L$ with $X \subseteq I$ such that $X/N \searrow A^*/A$, where $N=X\cap C$, $I/C \searrow X/N$ and $X/N$ is a $c$-factor;
\item[(iii)]there exists a supplement $F$ of $I/C$ in $L$ such that $L/N$ is isomorphic to the natural semi-direct sum of $I/C$ by $F/C$; and
\item[(iv)] $L/C$ is a primitive Lie algebra of type $2$ and $Soc(L/C)=I/C$.
\end{itemize} 
\end{lemma}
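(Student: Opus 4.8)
The plan is to first pin down the structure of $I$ and $C$, deduce (iv), then (i), and finally build explicit ideals and supplements for (ii) and (iii).

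By Lemma \ref{l:1}(i), $I=A^{\ast}+C$, and since $A^{\ast}/A$ is nonabelian, $C\neq I$ by Lemma \ref{l:1}(iii). The ideal $A^{\ast}\cap C$ of $L$ contains $A$ (as $[A,A^{\ast}]\subseteq A$) and is contained in $A^{\ast}$, so by minimality of $A^{\ast}/A$ it equals $A$ or $A^{\ast}$; the latter would force $[A^{\ast},A^{\ast}]\subseteq A$, i.e. $A^{\ast}/A$ abelian, so $A^{\ast}\cap C=A$. Hence $I/C=(A^{\ast}+C)/C\cong A^{\ast}/(A^{\ast}\cap C)=A^{\ast}/A$, and since $[A^{\ast},C]\subseteq A$ this identification respects both the bracket and the module action, so $I/C\cong_{L}A^{\ast}/A$ is a nonabelian chief factor of $L$ with $C_{L}(I/C)=C$. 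Any further minimal ideal of $L/C$ would centralise $I/C$ and hence lie in $C_{L}(I/C)/C=0$; thus $Soc(L/C)=I/C$. As $A^{\ast}/A$ is complemented it is non-Frattini, and $L$-equivalent chief factors are simultaneously Frattini or non-Frattini by the crown theory of Section 1, so $I/C$ is non-Frattini. Therefore some maximal subalgebra $U/C$ of $L/C$ does not contain $I/C$, whence $L/C=U/C+I/C$ and $U/C$ is core-free; so $L/C$ is primitive, and as its socle $I/C$ is a nonabelian minimal ideal it is of type $2$ (Theorem \ref{t:1}(iii)). This proves (iv).

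For (i): $B$ is an ideal of $L$ with $B\subseteq B^{\ast}$ and $[B,B^{\ast}]\subseteq B$, so $B\subseteq C_{L}(B^{\ast}/B)$; and $B^{\ast}/B\cong_{L}A^{\ast}/A$ because $B^{\ast}=B+A^{\ast}$ and $B\cap A^{\ast}=A$, so $C_{L}(B^{\ast}/B)=C$ and therefore $B\subseteq C$. Consequently $C+B^{\ast}=C+A^{\ast}=I$, and $C\cap B^{\ast}=B$ (otherwise $(C\cap B^{\ast})/B=B^{\ast}/B$, forcing $B^{\ast}/B$ abelian). Thus $I/C\searrow B^{\ast}/B$. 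If $I/C$ were a $c$-factor, say complemented by a subalgebra $T$, then $C+T$ would complement $B^{\ast}/B$ --- indeed, whenever $P/Q\searrow R/S$ are chief factors and $T$ complements $P/Q$, a direct check shows $Q+T$ complements $R/S$ --- contradicting that $B^{\ast}/B$ is a $c'$-factor. Hence $I/C$ is a $c'$-factor.

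For (ii) and (iii), fix a complement $M$ of $A^{\ast}/A$, so $L=A^{\ast}+M$ and $A^{\ast}\cap M=A$. Then $M\cap C$ is an ideal of $L$: $[A^{\ast},M\cap C]\subseteq[A^{\ast},C]\subseteq A\subseteq M\cap C$ and $[M,M\cap C]\subseteq M\cap C$, so $[L,M\cap C]\subseteq M\cap C$. Put $N:=M\cap C$ and $X:=A^{\ast}+N$; this is an ideal of $L$ with $A^{\ast}\subseteq X\subseteq I$, and since $A\subseteq N\subseteq C\cap M$ the modular law gives $X\cap C=(A^{\ast}\cap C)+N=N$ and $X\cap M=(A^{\ast}\cap M)+N=N$, while $X+C=I$. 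Hence $X/N$ is a chief factor with $X/N\cong_{L}I/C$, the relations $X/N\searrow A^{\ast}/A$ and $I/C\searrow X/N$ hold, and $X+M=A^{\ast}+M=L$ with $X\cap M=N$ shows $M$ complements $X/N$; so $X/N$ is a $c$-factor, which is (ii). For (iii), set $F:=M+C$, a subalgebra containing $C$; then $I+F=L$, so $F$ supplements $I/C$, and from $L=X+M$, $X$ an ideal of $L$, and $X\cap M=N$ we obtain the natural semi-direct decomposition $L/N=(X/N)\rtimes(M/N)$. The $L$-isomorphism $X/N\cong_{L}I/C$ together with the isomorphism $M/N=M/(M\cap C)\cong(M+C)/C=F/C$ are compatible with the bracket-induced actions, so $L/N\cong(I/C)\rtimes(F/C)$, proving (iii).

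The bulk of the work is in the last paragraph: checking that $M\cap C$ is an ideal, running the modular-law identities, and --- the real point --- verifying in (iii) that the two abstract isomorphisms are compatible with the respective actions, so that $L/N$ and $(I/C)\rtimes(F/C)$ are isomorphic as Lie algebras rather than merely of equal dimension. The one non-routine ingredient elsewhere is the non-Frattini claim for $I/C$ in (iv): this is not automatic for a nonabelian ideal and rests on the fact, from the crown machinery of Section 1, that $L$-equivalent chief factors are simultaneously Frattini or non-Frattini.
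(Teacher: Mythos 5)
Your parts (i)--(iii) are correct, and (ii)--(iii) actually take a cleaner route than the paper: where the paper refines a chief series between $A$ and $C$ and locates the transition from $c$-factors to $c'$-factors, you simply take a complement $M$ of $A^*/A$, observe that $N:=M\cap C$ is an ideal, and set $X:=A^*+N$; the modular-law computations you give do verify all the required relations, and the same $M$ then yields the semidirect decomposition in (iii). That is a genuine simplification of the paper's argument for those parts.

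Part (iv), however, has a real gap. You reduce everything to the claim that $I/C$ is non-Frattini, and you justify that by asserting that ``$L$-equivalent chief factors are simultaneously Frattini or non-Frattini by the crown theory of Section 1.'' No such statement appears in Section 1, and as a general statement it is false: in the Heisenberg algebra $L=\langle x,y,z\rangle$ with $[x,y]=z$, the chief factors $Fz/0$ and $(Fz+Fx)/Fz$ are $L$-isomorphic (both trivial one-dimensional modules, hence $L$-equivalent), yet the first is Frattini and the second is complemented. Restricting to nonabelian factors does not obviously rescue the claim either --- the paper's own machinery is built around the failure of such invariance: the definition of $D_L(A)$ deliberately intersects only over the \emph{non-Frattini} quotients $I/R$ with $I/R\sim_L A$, and an $m$-crossing $[U/V\searrow W/X]$ is by definition a pair of $L$-isomorphic chief factors one of which is Frattini and one of which is supplemented by a maximal subalgebra. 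So the one step your (iv) rests on is exactly the kind of assertion that must be proved, not quoted. Note also that your own construction does not close the gap directly: a maximal subalgebra of $L/C$ not containing $I/C$ would follow from $M+C\neq L$, but nothing you have established rules out $M+C=L$. The paper instead extracts the core-free maximal subalgebra of $L/C$ from the complement $U$ of $X/N$ after the further normalisation $U\cap C=0$ carried out in its proof of (iii) (it shows $U_L\subseteq C$, and the dimension count $\dim(U+C)=\dim L-\dim(I/C)<\dim L$ then makes $U+C$ a proper subalgebra containing $C$ but not $I$). To repair your argument you would need either to prove the non-Frattini claim for $I/C$ in this specific situation or to import an analogue of that reduction.
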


\begin{proof}
We have that $B^*/B\searrow A^*/A$, so $A^*+B=B^*$ and $A^* \cap B=A$. Also, $[B,A^*]+A=A$ or $A^*$. But the latter implies that $A^* \subseteq A+B\cap A^*=A$, a contradiction, so $[B,A^*] \subseteq A$; that is, $B \subseteq C$.
Hence $B^*+C=A^*+B+C=A^*+C=I$, by Lemma \ref{l:1}, and $B^*\cap C=(A^*+B)\cap C=B+A^*\cap C=B+A=B+A^*\cap B=B$. Thus $I/C\searrow B^{\ast }/B$. Suppose that $I/C$ is a $c$-factor of $L$. Then there is a subalgebra $U$ of $L$ such that $L=I+U$ and $I\cap U= C$. But now $L=B^*+C+U=B^*+U$ and $B^*\cap U=B^*\cap I\cap U=B^*\cap C=B$, so $B^*/B$ is a $c$-factor, a contradiction. Thus, $I/C$ is a $c'$-factor of $L$ and we have (i).
\par 

Let
\[ A =A_0 < A_1 < \ldots < A_n = C
\]
be part of a chief series of $L$ between $A$ and $C$. Then
\[ A^*=A^*+A_0 < A^*+A_1 < \ldots < A^*+A_n=I
\]
is part of a chief series of $L$ between $A^*$ and $I$. Suppose that $(A^*+A_i)/A_i$ is a $c$-factor for some $1 \leq i \leq n-1$. Then there is a subalgebra $U$ such that $L=A^*+A_i+U$ and $(A^*+A_i)\cap U=A_i$. Then $A_i \subseteq U$ so $L=A^*+U=A^*+A_{i-1}+U$ and $(A^*+A_{i-1})\cap U=A^*\cap U+A_{i-1}=A_{i-1}$, since $A^*\cap U \subseteq A^*\cap A_i=A$. Thus $(A^*+A_{i-1})/A_{i-1}$ is a $c$-factor. It follows that $(A^*+A_k)/A_k$ is a $c$-factor and $(A^*+A_{k+1})A_{k+1}$ is a $c'$-factor for some $0 \leq k \leq n-1$, since $A^*/A$ is a $c$-factor and $I/C$ is a $c'$-factor. Put $N=A_k$, $X=A^*+A_k$, $Y=A_{k+1}$ and $M=A^*+A_{k+1}$. Then it is straightforward to check that
\begin{equation*}
I/C\searrow M/Y\searrow X/N\searrow A^{\ast }/A
\end{equation*}
where $M/Y$ is a $c'$-factor and $X/N$ is a $c$-factor and we have (ii). 
\par

Without loss of generality we may assume that $N=0$. Let $U$ be a complement of $X$ in $L$, so $L=X+U$ and $X\cap U=0$, and consider, $K=U\cap C$. Then $[X,C] \subseteq X\cap C=N=0$, since $I/C \searrow X/N$, so $K$ is an ideal of $L$. We have $U+(K+X) =L$ and $U\cap ( K+X) =K+( U\cap X)=K$. It follows that $(K+X)/K$ is a $c$-factor of $L$. Also $K+X+C=X+C=A^*+A_k+C=A^*+C=I$ and $(K+X)\cap C=K+X\cap C=K$, so $I/C\searrow (K+X)/K$ and we may assume that $K=0$.

Observe that the map
\begin{equation*}
u+x\longrightarrow (x+C, u+C)
\end{equation*}%
where $u\in U$ and $x\in X$ defines an epimorphism between $L=X+U$ and the
natural semidirect sum $ I/C\rtimes((U+C)/C)$. Furthermore, it is easy to check that the kernel of this map is $N$, so putting $F=U+C$ proves (iii).
\par

We have
\[ \frac{I}{C}=\frac{A^*+C}{C}\cong \frac{A^*}{A^*\cap C}=\frac{A^*}{A},
\]
and $[U_L,I] \subseteq C$, whence $[U_L,A^*] \subseteq A$ and $U_L \subseteq C$. This establishes (iv).
\end{proof}

We can construct an example of the situation in Lemma \ref{l:4} as follows.
\begin{example} Let $L_{0}$ be a primitive Lie algebra of type $2$ with $Soc( L_{0})
=X_{0}$, where $X_{0}$ is not complemented in $L_{0}$, and let $U_0$ be a supplement to $X_0$ in $L_0$. So, for example, we could take $L_0=sl(2)\otimes {\mathcal O}_m+1\otimes {\mathcal D}$, $X_0=sl(2)\otimes {\mathcal O}_m$, $U_0=(Fe_0+Fe_1)\otimes {\mathcal O}_m+1\otimes {\mathcal D}$ where ${\mathcal O}_m$ is the truncated polynomial algebra in $m$ indeterminates, ${\mathcal D}$ is a solvable subalgebra of Der(${\mathcal O}_m$), ${\mathcal O}_m$ has no ${\mathcal D}$-invariant ideals, and the ground field is algebraically closed of characteristic $p>5$ (see \cite[Theorem 6.4]{pasha}).

 Put $Y_0=U_0\cap X_0$. Then $X_0$ is a $U_0$-algebra and so we can form the semi-direct sum
\[ L=X_0 \rtimes U_0= \{(x,u) \mid x \in X_0, u \in U_0 \}.
\]
Put $X=\{ (x,0) \mid x \in X_0\}$, $U=\{ (0,u) \mid u \in U_0\}$. Then $L=X+U$, $X\cap U=0$ and $X$ is an ideal of $L$.
\par

Now let $B=\{(0,y) \mid y \in Y_0\}$, $W=\{  (y,0)\mid y \in Y_0\}$. Putting $C=\{(y,-y) \mid y \in Y_0\}$ and $I=X+C$, we have that
\begin{itemize}
\item[(1)] $C$ is an ideal of $L$, $X\cap C=0$, $I=X+B$ and $B=U\cap I$;
\item[(2)] $X\cap (B+C)=W$, $B+W=B+C$, $W$ is an ideal of $C+U$ and $[W,C]=0$;
\item[(3)] $W \cong_U B \cong_U C$.
\end{itemize}
Consider the following chief series of $L$.
\[  0 < C < I < \ldots < L  \hbox{ and } 0 < X < I < \ldots < L.
\]
We have the situation of Lemma \ref{l:4} with $N=0$. Then $I/C$ is a $c'$-factor and $X/0$ is a $c$-factor as in the lemma. Suppose that $C$ is complemented in $L$, so there is a maximal subalgebra $M$ of $L$ such that $L=C\dot{+} M$. Then $[C,X]=0$ so $X\cap M$ is an ideal of $L$. But $X$ is a minimal ideal of $L$, so $X\cap M=0$ or $X \subseteq M$. The former implies that $\dim(X+M)=\dim X+\dim M>\dim Y_0+\dim M=\dim C+\dim M= \dim L$, which is impossible. The latter implies that $L=I+M$ and $I\cap M=(C+X)\cap M=C\cap M+X=X$, so $M$ is a complement for $I/X$. If the chief factors between $I$ and $L$ are the same in each series then the second series has one more complemented chief factor than the first.  
\end{example}
Then we have the following proposition.

\begin{proposition}\label{p:6}
Suppose that, in the situation of Proposition \ref{p:5}, $X_{i}/X_{i-1}$ and $Y_{\pi(
i) }/Y_{\pi( i) -1}$ are $m'$-factors. Then we have:
\begin{itemize}
\item[(a)] both factors are $c'$-factors; or
\item[(b)] both factors are nonabelian $c$-factors; or
\item[(c)] both factors are nonabelian, one of them is a $c$-factor, the other one is a $c'$-factor, and there exist ideals $I,\ C,\ X,\ $and $N$ of $L$ of
satisfying $(i) $-$(iv) $ of Lemma \ref{l:4}.
\end{itemize}
\end{proposition}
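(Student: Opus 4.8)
The plan is to analyze what happens to the pair $X_i/X_{i-1}$ and $Y_{\pi(i)}/Y_{\pi(i)-1}$ under the hypothesis that both are $m'$-factors, using the structural dichotomy already available. First I would recall that by Proposition \ref{p:5}(i) the two factors are $L$-equivalent, so by the remarks at the start of Section 2 they are simultaneously abelian or nonabelian; and an abelian $m'$-factor is exactly a Frattini (hence $c'$-) factor. So in the abelian case both are $c'$-factors and we are in case (a). Thus I may assume from now on that both factors are nonabelian, and the task is to show that if they are not both $c'$-factors and not both $c$-factors, then the mixed situation forces the configuration of Lemma \ref{l:4}.

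Next I would use the classification of how two chief factors in parallel chief series can be related. Since $X_i/X_{i-1}$ and $Y_{\pi(i)}/Y_{\pi(i)-1}$ are $m$-related (Proposition \ref{p:5}) and both are $m'$-factors, the only possibilities in the list of four $m$-relations that survive are the ones built out of Frattini factors and $m$-crossings; a direct check of the four cases shows that being $m'$-factors rules out the ``common supplemented $R/S$'' scenario (case 1 of the definition) unless $R/S$ itself is Frattini, and similarly narrows case 3, while cases 2 and 4 involve an $m$-crossing $[U/V \searrow W/X]$. Tracking the projections $\searrow$ and $\swarrow$ through these cases, I would produce a chain $B^*/B \searrow A^*/A$ in which $B^*/B$ descends (via $\swarrow$-steps) to one of the two given factors and $A^*/A$ descends to the other. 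The point is that an $m$-crossing is precisely a pair consisting of a Frattini factor above a maximal-supplemented factor sharing the same ``bottom,'' which is the germ of the $c'$-above-$c$ phenomenon.

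Then, assuming we are not in case (a) or (b) — so that, after reorganising, one of the two given factors is a $c$-factor and the other a $c'$-factor, both nonabelian — I would invoke Lemma \ref{l:4} directly with $B^*/B$ the $c'$-factor and $A^*/A$ the $c$-factor in the chain produced above, noting that $B^*/B \searrow A^*/A$ holds by construction. Lemma \ref{l:4} then hands us the ideals $I = I_L(A^*/A)$, $C = C_L(A^*/A)$, and $X$, $N$ satisfying (i)--(iv), which is exactly the content of (c). I would also need to check the converse-flavoured bookkeeping that in this mixed case we genuinely cannot be in (a) or (b): if both were $c'$-factors or both $c$-factors we would not have produced the mixed chain, and conversely Lemma \ref{l:4}(i)--(ii) exhibits $I/C$ as a $c'$-factor and $X/N$ as a $c$-factor with $I/C \searrow X/N \searrow A^*/A$ and $I/C \searrow B^*/B$, pinning down which of the original two is which.

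The main obstacle I expect is the combinatorial case analysis in the middle step: unwinding the four definitions of ``$m$-related'' under the extra constraint that \emph{both} factors are $m'$-factors, and in each surviving case exhibiting the concrete pair $B^*/B \searrow A^*/A$ (nonabelian, one a $c$-factor and one a $c'$-factor) to which Lemma \ref{l:4} applies — in particular verifying the $\searrow$ relation survives the chain of $\swarrow$ and $\searrow$ steps, since $\searrow$ and $\swarrow$ compose only under isomorphism-type control of the intermediate factors, all of which are $L$-equivalent here by Proposition \ref{p:5}(i). Once that bookkeeping is done, (a), (b), (c) are simply the three mutually exclusive outcomes: abelian (hence both $c'$), nonabelian with both $c$ or both $c'$, or nonabelian and mixed, with the mixed case governed by Lemma \ref{l:4}.
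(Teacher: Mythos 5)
Your high-level plan coincides with the paper's: settle the abelian case (an abelian $m'$-factor is Frattini, hence a $c'$-factor, giving (a)), reduce to the mixed nonabelian situation where one factor is a $c$-factor and the other a $c'$-factor, unwind the four forms of $m$-relatedness supplied by Proposition \ref{p:5}, and feed a resulting pair $B^*/B \searrow A^*/A$ into Lemma \ref{l:4}. The difficulty is that the decisive middle step is not carried out, and the way you preview it is backwards. You assert that the hypothesis that both given factors are $m'$-factors eliminates case 1 of the definition of $m$-related (the common supplemented factor $R/S$), and that the surviving configurations are the $m$-crossing cases 2 and 4. In fact the opposite happens. A nonabelian supplemented (i.e.\ non-Frattini) chief factor need not be \emph{complemented} by a maximal subalgebra, so case 1 is entirely compatible with both factors being $m'$-factors; and it is precisely case 1 that yields conclusion (c): from $R/S \searrow X_i/X_{i-1}$ and the assumption that $X_i/X_{i-1}$ is a $c'$-factor one deduces that $R/S$ is a $c'$-factor (a complement of $R/S$ would restrict to a complement of $X_i/X_{i-1}$), while $R/S \searrow Y_{\pi(i)}/Y_{\pi(i)-1}$ with the latter a $c$-factor puts you exactly in the hypotheses of Lemma \ref{l:4} with $B^*/B=R/S$ and $A^*/A=Y_{\pi(i)}/Y_{\pi(i)-1}$.

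Conversely, the cases you expect to furnish the pair for Lemma \ref{l:4} are the ones that must be shown impossible, and this requires an ingredient absent from your sketch. Case 3 fails because a $c$-factor projects under $\searrow$ to a $c$-factor, which would force the Frattini factor $Y/Z$ to be complemented. Cases 2 and 4 are eliminated using Theorem 2.4 of \cite{[12]}, which allows an $m$-crossing $[U/V \searrow W/X]$ to be reversed to the $m$-crossing $[U/W \searrow V/X]$: in case 2 the maximal subalgebra supplementing $W/X$ is shown to complement $V/X$ (otherwise it would supplement the Frattini factor $U/V$), making $X_i/X_{i-1}$ a $c$-factor, a contradiction; in case 4 the reversed crossing makes $U/W$ Frattini, contradicting that $Y_{\pi(i)}/Y_{\pi(i)-1}$ is a $c$-factor. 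Without this elimination, and with the surviving case misidentified, your argument never produces the concrete pair to which Lemma \ref{l:4} applies, so as it stands there is a genuine gap.
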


\begin{proof}
Assume that $X_{i}/X_{i-1}$ is a $c'$-factor and that $Y_{\pi( i) }/Y_{\pi( i) -1}$
is a $c$-factor. As these two chief factors are $m$-related, one of the following situations arises.
\begin{itemize}
\item[1.]  There is a supplemented chief factor $R/S$ such that 
$$X_i/X_{i-1} \swarrow R/S \searrow Y_{\pi( i) }/Y_{\pi( i) -1}.$$
 Since $X_i/X_{i-1}$ is a $c'$-factor, so is $R/S$. We are thus in the situation of Lemma \ref{l:4} with $B^*=R$, $B=S$, $A^*= Y_{\pi(i)}$, $A^*=Y_{\pi(i)-1}$.
\item[2.] There is an $m$-crossing $[U/V \searrow W/X]$ such that 
$$X_i/X_{i-1} \swarrow V/X \text{ and } W/X \searrow Y_{\pi( i) }/Y_{\pi( i) -1}.$$  
Then \cite[Theorem 2.4]{[12]} implies that $[U/W \searrow V/X]$ is also an $m$-crossing.
\par

Suppose that $W/X$ is supplemented by the maximal subalgebra $M$ of $L$, so $L=W+M$ and $X \subseteq W\cap M$. Then $L=W+M=U+M$. If $V \subseteq M$ then $V \subseteq U\cap M$ and $U/V$ is supplemented by $M$, contradicting the fact that $U/V$ is Frattini. Hence $V \not \subseteq M$. It follows that $L=V+M$. Moreover, $X \subseteq  V\cap M \subseteq V$. As $V/X$ is a chief factor of $L$ we have $V\cap M=X$, and so $V/X$ is a $c$-factor. But then $X_i/X_{i-1}$ is a $c$-factor, which is a contradiction. Thus this case cannot occur.
\item[3.] There is a Frattini chief factor $Y/Z$ such that 
$$X_i/X_{i-1} \searrow Y/Z \swarrow Y_{\pi( i) }/Y_{\pi( i) -1}.$$ 
Since $Y_{\pi( i) }/Y_{\pi( i) -1}$ is a $c$-factor, so is $Y/Z$. But $Y/Z$ is Frattini, so this is impossible and this case cannot occur.
\item[4.] There is an $m$-crossing $[U/V \searrow W/X]$ such that 
$$X_i/X_{i-1} \searrow U/V \text{ and } U/W \swarrow Y_{\pi( i) }/Y_{\pi( i) -1}.$$ 
Then \cite[Theorem 2.4]{[12]} implies that $[U/W \searrow V/X]$ is also an $m$-crossing, so $U/W$ is a $c'$-factor, contradicting the fact that $Y_{\pi( i) }/Y_{\pi( i) -1}$ is a $c$-factor. Hence this case cannot occur either.
\end{itemize}
\end{proof}

Let $A$ be an irreducible $L$-algebra. We say that $A$ is of {\em $cc'$-type} in $L$ if there exist two chief series of $L$ in which case (c)  of Proposition \ref{p:6} holds with $A\sim
_{L}X_{i}/X_{i-1}$ (Clearly this forces $A$ to be nonabelian.)

\begin{proposition}\label{l:7}
Let $v$ be the number of equivalence classes of irreducible $L$%
-algebras of $cc'$-type. Then the number of complemented chief factors on two chief
series of $L$ differs by at most $v.$
\end{proposition}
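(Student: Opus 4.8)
The plan is to compare two chief series of $L$ by repeated use of Proposition~\ref{p:5}. Fix two chief series; by Proposition~\ref{p:5} there is a unique permutation $\pi$ pairing the chief factors $X_i/X_{i-1}$ of the first series with the chief factors $Y_{\pi(i)}/Y_{\pi(i)-1}$ of the second, and each such pair is $m$-related, hence $L$-equivalent. Partition the index set according to the $L$-equivalence class of the paired factors. Within a single class, I would argue that the contribution to $\bigl|\#\{\text{$c$-factors in series 1}\} - \#\{\text{$c$-factors in series 2}\}\bigr|$ is at most $1$ when that class is of $cc'$-type, and is $0$ otherwise; summing over the (at most $v$) classes of $cc'$-type then gives the bound.

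The heart of the argument is the case analysis furnished by Proposition~\ref{p:6}. Consider a paired block whose common $L$-equivalence class is $[A]$. If $\pi(i)=j$ and $X_i/X_{i-1}$, $Y_j/Y_{j-1}$ are $m$-factors, then by Proposition~\ref{p:5}(ii) they are both $c$-factors and contribute equally to the two counts, so such pairs are irrelevant. If instead they are $m'$-factors, Proposition~\ref{p:6} says we are in case (a) (both $c'$-factors, contributing $0$ to each side), case (b) (both nonabelian $c$-factors, again contributing equally), or case (c) (one $c$-factor, one $c'$-factor, with the ideals $I,C,X,N$ of Lemma~\ref{l:4} present, so by definition $A$ is of $cc'$-type). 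Thus a discrepancy in the paired block can only be created by pairs in case (c), and every such pair forces the class $[A]$ to be of $cc'$-type.

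It remains to bound, for a fixed $cc'$-type class $[A]$, the total discrepancy it can produce to at most $1$. The key fact is that the crown quotient $I_L(A)/D_L(A)$ governs exactly which chief factors $L$-equivalent to $A$ can appear, and how they sit: by Proposition~\ref{p:4}, $I_L(A)=J_L(A)+D_L(A)$ with $J_L(A)\cap D_L(A)=E_L(A)_{A\rtimes L}$, and the analysis in Lemma~\ref{l:4} shows that in case~(c) the $c'$-factor is $L$-equivalent to $I/C$ with $C=C_L(A^*/A)$, while the $c$-factor lies below $C$. Concretely, I would show that in any single chief series, among the chief factors $L$-equivalent to $A$, the number that are $c$-factors is completely determined by the series of ideals between $D_L(A)$ and $I_L(A)$ together with the single ``boundary'' chief factor straddling the crown (the factor $I/R$ with $D_L(A)\subseteq R$, which by the proof of Proposition~\ref{p:4} is $L$-isomorphic to $A$ and is the only place a $c/c'$ ambiguity can arise). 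Since the number of chief factors strictly inside the crown and the status of the boundary factor are each series-independent up to a single unit of slack, two series can disagree on the $c$-count within class $[A]$ by at most $1$. Adding up over the at most $v$ classes of $cc'$-type yields the proposition.

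The main obstacle I anticipate is making the last paragraph rigorous: one must show that all the ``variability'' of $c$-factors within one $L$-equivalence class is concentrated in a single chief factor (the crown boundary), so that the per-class discrepancy never exceeds $1$. This requires pushing the refinement argument in the proof of Lemma~\ref{l:4}(ii) — where a chief series between $A$ and $C$ is shown to have all its $A^*$-shifted factors be $c$-factors up to some index $k$ and $c'$-factors thereafter — to the conclusion that the index $k$, and hence the count, is forced up to $\pm 1$ by the $L$-module structure of $I_L(A)/D_L(A)$ rather than by the particular series chosen. Once that local rigidity is established, the global bound by $v$ is just bookkeeping over the equivalence classes.
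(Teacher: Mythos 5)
Your reduction --- pair the factors of the two series via Proposition~\ref{p:5}, observe that paired $m$-factors are both $c$-factors, and that for paired $m'$-factors Proposition~\ref{p:6} forces agreement except in case~(c), which in turn forces the class to be of $cc'$-type --- is the right bookkeeping, and it is the same first half as the paper's argument. But what it proves is that the discrepancy is bounded by the number of \emph{pairs} $(i,\pi(i))$ falling into case~(c), whereas the proposition bounds it by the number of \emph{classes} of $cc'$-type. The entire content of the statement is therefore your final claim that each class contributes at most one such pair, and that is exactly the step you leave open. The mechanism you propose for it (localising all variability to a single ``boundary factor'' $I/R$ straddling the crown, and arguing that the cut-off index $k$ from the refinement in Lemma~\ref{l:4}(ii) is rigid) is neither carried out nor obviously workable: nothing in Lemma~\ref{l:4} pins down $k$ independently of the chosen refinement, and even your own formulation --- that the count inside the crown and the status of the boundary factor are \emph{each} series-independent up to one unit of slack --- would a priori yield a per-class bound of $2$, not $1$.

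The paper closes this gap by a different and cheaper observation: as a consequence of Proposition~\ref{p:6}, on any single chief series each non-abelian crown carries at most one $m'$-factor. Granting that, within a fixed nonabelian equivalence class all factors of a series except possibly one are $m$-factors, hence $c$-factors, and their number is series-independent by Proposition~\ref{p:5}; the only room for disagreement between the two series is the $c$ versus $c'$ status of the single possible $m'$-factor in that class, which is the per-class bound of $1$, and a disagreement there is precisely case~(c), i.e.\ the $cc'$-type situation. So the missing ingredient in your proposal is the ``at most one $m'$-factor per non-abelian crown per chief series'' fact; with it your partition-and-sum framework immediately gives the proposition, and without it you have only bounded the discrepancy by the number of case-(c) pairs rather than by $v$.
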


\begin{proof}
A consequence of Proposition \ref{p:6} is that, on a chief series of $L$, for each
non-abelian crown there is at most one $m'$-factor. If the crown corresponds to a factor of $cc'$-type, this shows that on each chief series there is at most one $c'$-factor corresponding to the crown.
\end{proof}

\begin{theorem}
Let $A$ be a nonabelian irreducible $L$-algebra. Then $A$ is of $cc'$-type in $L$ if and only if
\begin{equation*}
E_{L}\left( A\right) \subset D_{L}\left( A\right) \subset I_{L}\left(
A\right)
\end{equation*}%
and $Soc( P) $ is a $c'$-factor of $P,$ where $P$ is the corresponding primitive
epimorphic image of $L.$
\end{theorem}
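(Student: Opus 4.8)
The plan is to prove the two implications separately, using Proposition \ref{p:6}(c) together with Lemma \ref{l:4} to unpack ``$cc'$-type'' and Proposition \ref{p:4} to control the two strict inclusions. Write $I=I_L(A)$, $D=D_L(A)$, $E=E_L(A)_{A\rtimes L}$ and $J=J_L(A)$ (so the ``$E_L(A)$'' of the statement is read as the core $E$, as in Proposition \ref{p:4}). For the primitive image: when $D\subsetneq I$ the defining set of $D$ is non-empty, so there is a non-Frattini chief factor $I/R$ with $I/R\sim_L A$; for any such $R$ the algebra $P:=L/C_L(I/R)$ is primitive of type $2$ with $Soc(P)=I/C_L(I/R)\cong_L I/R$ (the situation of Lemma \ref{l:4}(iv)), and since every complement in $L$ of the chief factor $I/C_L(I/R)$ contains $C_L(I/R)$, such complements correspond to the complements of $Soc(P)$ in $P$; hence ``$Soc(P)$ is a $c'$-factor of $P$'' means exactly that $I/C_L(I/R)$ is a $c'$-factor of $L$.

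For ($\Rightarrow$), suppose $A$ is of $cc'$-type and take the two chief series and the matched slot realising case (c) of Proposition \ref{p:6}; as in the proof of that proposition we are then in the situation of Lemma \ref{l:4}, with a nonabelian $c$-factor $A^{*}/A\sim_L A$, a nonabelian $c'$-factor $B^{*}/B$ with $B^{*}/B\searrow A^{*}/A$, $I=I_L(A^{*}/A)=I_L(A)$ and $C=C_L(A^{*}/A)$. By Corollary \ref{c:1}, $D\subseteq C$, and $C\subsetneq I$ because $I/C\ne 0$, so $D\subsetneq I$. By Lemma \ref{l:4}(iv), $P=L/C$ is primitive of type $2$ with $Soc(P)=I/C$, and by Lemma \ref{l:4}(i) $I/C$ is a $c'$-factor of $L$, so by the remark above $Soc(P)$ is a $c'$-factor of $P$. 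It remains to obtain $E\subsetneq D$. By Proposition \ref{p:4}, $E=J\cap D$ and $I=J+D$, so $E\subsetneq D$ is equivalent to $J\subsetneq I$, i.e.\ to the existence of an $L$-algebra $B$ with $B\sim_L A$ and $B\ncong_L F$ for all $F\in C\mathcal{F}(L)$. Such a $B$ arises by applying Proposition \ref{p:2} to the $c$-factor $A^{*}/A$: it yields $B$ with $A^{*}/A\sim_L B$ and $A^{*}\subseteq C_L(B)$. One then checks $B$ is not $L$-isomorphic to a chief factor: if $B\cong_L F/G$ with $F/G\in C\mathcal{F}(L)$ then $C_L(F/G)=C_L(B)\supseteq A^{*}$ and $I_L(F/G)=I$, and tracing this through the configuration $B^{*}/B\searrow A^{*}/A$ forces $B^{*}\subseteq C_L(B')$ for some $B'\sim_L A$, whence Proposition \ref{p:2} would make $B^{*}/B$ complemented, a contradiction.

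For ($\Leftarrow$), suppose $E\subsetneq D\subsetneq I$ and $Soc(P)$ is a $c'$-factor of $P$. Since $D\subsetneq I$, the crown $I/D$ is a non-zero completely reducible $L$-algebra whose irreducible components are all $L$-equivalent to $A$ (as in \cite[Theorem 3.2]{[11]}, cf.\ the proof of Lemma \ref{l:3}). Fix a non-Frattini chief factor $I/R\sim_L A$ and put $C=C_L(I/R)$, $P=L/C$; then $D\subseteq R\subseteq C\subseteq I$, and refining this chain to a chief series of $L$ yields a first chief series in which the chief factor $I/C$ appears, where $I/C$ is $L$-equivalent to $A$, non-Frattini, and by hypothesis a $c'$-factor, hence an $m'$-factor. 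Now invoke $E\subsetneq D$: by Proposition \ref{p:4} this says $J\subsetneq I$, so there is an $L$-algebra $B\sim_L A$ with $B\ncong_L F$ for all $F\in C\mathcal{F}(L)$; writing $B\cong_L A_{\delta}$ with $\delta\in Z^{1}(L,A)$ (Proposition \ref{p:1}) and running the converse half of Proposition \ref{p:2} against the complete reducibility of $I/D$, one produces a second chief series in which the crown-slot $L$-equivalent to $A$ matched to $I/C$ is instead a nonabelian $c$-factor. Finally Proposition \ref{p:5} matches the two series: by $(i)$ the matched factors are $L$-equivalent, hence both $L$-equivalent to $A$; by $(ii)$ they are simultaneously $m'$-factors; and one is a $c$-factor while the other is a $c'$-factor. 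So case (c) of Proposition \ref{p:6} holds and $A$ is of $cc'$-type.

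The step I expect to be the main obstacle, in both directions, is the passage between ``$E_L(A)_{A\rtimes L}\subsetneq D_L(A)$'' and the existence of a chief series realising the relevant $m'$-slot $L$-equivalent to $A$ as a (necessarily nonabelian) $c$-factor. Proposition \ref{p:4} reduces the cohomological side to the equivalence-theoretic statement $J_L(A)\subsetneq I_L(A)$, namely the presence of an $L$-equivalence class of $A$ not represented among the chief factors; but turning this into a genuine complement by a possibly non-maximal subalgebra of a suitable crown chief factor (through Proposition \ref{p:2} and the complete reducibility of the crown), and conversely extracting such a missing class from a $cc'$-configuration, is where the real work lies. The remaining steps are bookkeeping with Corollary \ref{c:1}, Lemma \ref{l:3} and Lemma \ref{l:4}.
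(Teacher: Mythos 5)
Your overall plan coincides with the paper's: both directions turn on Proposition \ref{p:4}, which converts $E_{L}(A)\subset D_{L}(A)$ into $J_{L}(A)\subset I_{L}(A)$, i.e.\ into the existence of an $L$-algebra $B\sim_{L}A$ with $B\ncong_{L}F$ for every $F\in C\mathcal{F}(L)$, and on Proposition \ref{p:2} to pass between such a $B$ and a complement. However, at both of the points you yourself flag as ``where the real work lies'' there are genuine gaps, and at the first of them the mechanism you propose is not just unproved but appears to fail. For ``$cc'$-type $\Rightarrow E\subset D$'': you take $B_{0}$ from Proposition \ref{p:2} applied to the $c$-factor $A^{*}/A$ with (non-maximal) complement $V$, so $C_{L}(B_{0})=C_{V}(A^{*}/A)+A^{*}$, and claim that if $B_{0}\cong_{L}F/G$ then ``tracing through the configuration'' yields $B^{*}\subseteq C_{L}(B')$ for some $B'\sim_{L}A$, whence $B^{*}/B$ would be complemented. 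With $B'=B_{0}$ this needs $B\subseteq C_{V}(A^{*}/A)+A^{*}$; but writing $c=v+a$ with $v\in C_{V}(A^{*}/A)$, $a\in A^{*}$ and $c\in C_{L}(A^{*}/A)$ forces $a\in C_{L}(A^{*}/A)\cap A^{*}=A\subseteq V$, so $C_{L}(A^{*}/A)\cap\bigl(C_{V}(A^{*}/A)+A^{*}\bigr)=C_{V}(A^{*}/A)$, and the required containment reduces to $B\subseteq V$, which nothing guarantees. You give no alternative $B'$. The paper's actual argument is structurally different: it builds $B$ from the complement $U$ of the inner factor $X/N$ of Lemma \ref{l:4}(ii), computes $C_{L}(B)=X+U\cap C$, and shows that if some chief factor were $L$-isomorphic to $B$ then $L/(U\cap C)$ would be primitive of type $3$, forcing $X/N$ to be an $m$-factor --- contradicting the fact that in case (c) of Proposition \ref{p:6} both matched factors are $m'$-factors. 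The contradiction is with $m'$-ness, not $c'$-ness, and it requires exhibiting that type-$3$ quotient, which your sketch never does.

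In the converse direction the missing content is the explicit second chief factor: the paper puts $H=C_{L}(B)$, $K=H\cap R$, and proves $H+R=I$ (if $H+R=R$ then $H=R$ by comparing dimensions via the embedding $I/H\hookrightarrow B$ of Lemma \ref{l:1}(ii), whence $D\subseteq H$ and Lemma \ref{l:3} gives $B\cong_{L}I/H\in C\mathcal{F}(L)$, contradicting the choice of $B$); then $H/K\cong_{L}I/R\sim_{L}A$ is a chief factor which Proposition \ref{p:2} complements, sitting across the $c'$-factor $I/R$ so that the two series exhibit case (c). Your phrase ``running the converse half of Proposition \ref{p:2} against the complete reducibility of $I/D$'' neither identifies which factor gets complemented, nor why it is a chief factor, nor why the non-isomorphism of $B$ to chief factors is exactly what makes the construction non-degenerate. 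The skeleton is right, but both load-bearing steps still need to be supplied.
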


\begin{proof}
Put $E=E_{L}( A)$, $D=D_L(A)$ and $I=I_L(A)$ and suppose that $E\subset D\subset I.$ Then, since $D_L(A) \neq \emptyset$, there is an ideal $R$ of $L$ such that $I/R \sim_LA$. Also, $J_L(A) \neq I$, since otherwise $D=E$, by Proposition \ref{p:4}, so there is an ideal $B$ of $L$ with $B \sim_L A$ and $B \not \cong_L F$ if $F \in \mathcal{CF}(L)$. Put $H=C_L(B)$. Then $H \subseteq I_L(B)=I$, by Proposition \ref{p:1}, and $H \neq I$, by Lemma \ref{l:1} (iii), so $H \subset I$. Put $K=H\cap R.$ Then $H/K\cong_L (H+R)/R$. Moreover, if $H+R=R$ then $H=R$, $H \subset I=I_L(B)$ and $D_L(B)=D\subseteq R=H$, so $I_L(B)/C_L(B) \cong_L B$, by Lemma \ref{l:3}, contradicting the fact that $B \not \cong_L F$ if $F \in \mathcal{CF}(L)$. It follows that $H+R=I$ and $H/K \cong_L I/R$, whence $A\sim _{L}H/K.\ $By Proposition \ref{p:2}, $H/K$ is a $c$-factor of $L$ and $I/R$ is a $c'$-factor and so we have that $A$ is of $cc'$-type.

Conversely, if $A$ is of $cc'$-type, from the definition we obtain ideals $I,\ C,\ X,\ $and $N$ of $L$ and a
subalgebra $U$ of $L$ such that $I/C\sim _{L}A,\ I/C$ and $X/N\ $are $m'$-factors, $I/C$ is a $c'$-factor, $I/C\searrow X/N$, and $U$ complements $X/N$ in $L$ (using the same notation as Proposition \ref{p:6}).Note that $I=C+X$ and $C\cap X=N$, so $I/C \cong_L X/N$, whence $C_{L}(X/N)=C_{L}( I/C) =C$, by Lemma \ref{l:4} (iii). Now $[L,U\cap C]=[U+X,U\cap C] \subseteq U\cap C$ since $[X,C] \subseteq N \subseteq U\cap C$, so $U\cap C$ is an ideal of $L$. Also 
\[ \frac{X+U\cap C}{U\cap C} \searrow \frac{X}{N}.
\]
\par

As in the proof of Proposition \ref{p:2}, we obtain an $L$-algebra $B,$ $B\sim
_{L}A$ such that
\begin{equation*}
C_{L}( B) =X+C_{U}(X/N) =X+ U\cap
C.
\end{equation*}

Suppose now that there exists $F\in C\mathcal{F}( L) ,$ such that 
$F\cong _{L}B.$ Then $I_{L}( F) =I_{L}( B)
=I$ and $C_{L}( F) =C_{L}( B) =X+U\cap C,$ and so $F\cong
_{L}I/(X+U\cap C).$ It follows that $I/(X+U\cap C) \sim _{L}I/C,$ which is a primitive Lie
algebra of type $2$, by Lemma \ref{l:4} (iv). But
\[ \left. \frac{L}{X+U\cap C} \cong_L \frac{L}{U\cap C}\middle/ \frac{X+U\cap C}{U\cap C} \right.,
\] so $L/U\cap C$ is primitive of type $3$. It follows that $(X+U\cap C)/U\cap C$ is an $m$-factor, and hence so is $X/N$, by \cite[Lemma 2.1]{[12]}, a contradiction.

 Moreover we have that
\begin{equation*}
D_{L}\left( A\right) \subseteq C\subset I,\ J_{L}\left( A\right) \subseteq
X\subset I,
\end{equation*}
which completes the proof.
\end{proof}


\begin{thebibliography}{99}
\bibitem{[1]} $\ $D. W. BARNES, `On the Cohomology of Soluble Lie Algebras', 
\textit{Math. Zeitschr}. \textbf{101} (1967), 343-349.

\bibitem{[2]} R. E. BLOCK,`Determination of the Differentiably Simple Rings
with a Minimal Ideal', \textit{Ann. of Math. }\textbf{90} (1969), 433-459.

\bibitem{[3]} A. BALLESTER-BOLINCHES AND L. M. EZQUERRO, `Classes of Finite
Groups', \textit{Mathematics and its Applications} \textbf{584} (2006),
Springer.\smallskip

\bibitem{CE} C. CHEVALLEY and S.EILENBERG, `Cohomology theory of Lie groups and Lie algebras', \textit{Trans. Amer. Math. Soc.} \textbf{64} (1948), 85-124.

\bibitem{[4]} H. STRADE\ AND R. FARNSTEINER, `Modular Lie Algebras and Their
Representations', \textit{Marcel-Dekker, New York and Basel }1988.

\bibitem{[5]} J. P. LAFUENTE, `Nonabelian Crowns and Schunck Classes of
Finite Groups', \textit{Arch. Math}. \textbf{42}(1984), 32-39.

\bibitem{pasha} P. ZUSMANOVICH, `Deformations of $W_1(n) \otimes A$ and modular semisimple Lie algebras with a solvable maximal subalgebra', \textit{J. Algebra} \textbf{268}(2003), 603-635.

\bibitem{[6]} P. J. SERAL AND J. P. LAFUENTE, `On Complemented Nonabelian
Chief Factors of a Finite Group', \textit{Isr. J. of Math.,} \textbf{106},
(1998), 177-188.

\bibitem{[7]} D. A. TOWERS, `A Frattini Theory for Algebras', \textit{%
Proc. London Math. Soc.} (3) \textbf{27} (1973), 440-462.

\bibitem{[8]} D. A. TOWERS, `On Lie Algebras in which Modular Pairs of
Subalgebras are Permutable', \textit{J. Algebra }\textbf{68} (1981), 369-377.

\bibitem{[9]} D. A. TOWERS, `Complements of Intervals and Prefrattini
Subalgebras of Solvable Lie Algebras', \textit{Proc. Amer. Math. Soc.} 
\textbf{141} (2013), 1893-1901.

\bibitem{[10]} D. A. TOWERS, `On Conjugacy of Maximal Subalgebras \ of
Solvable Lie Algebras', \textit{Comm. Alg.} \textbf{42 (3)} (2014),
1350-1353.

\bibitem{[11]} D. A. TOWERS, `Maximal Subalgebras and Chief Factors of Lie
Algebras', {\em J. Pure Appl. Algebra} {\bf 220} (2016), 482--493..

\bibitem{[12]} D. A. TOWERS, `On Maximal Subalgebras and a Generalised Jordan-H\"older Theorem for Lie
Algebras', arxiv.org/abs/1509.06951.
\end{thebibliography}
\end{document}